\renewcommand{\phi}{\varphi}
\newtheorem{theorem}{Theorem}
\numberwithin{theorem}{section}
\newtheorem{conjecture}{Conjecture}
\newtheorem{corollary}[theorem]{Corollary}
\newtheorem{lem}[theorem]{Lemma}
\newtheorem{definition}[theorem]{Definition}
\title{4-Chromatic Graphs Have At Least 4 Cycles of Length 0 mod $3$}
\author{Sean Kim\thanks{Department of Mathematics, California State University San Marcos, San Marcos, CA 92096. E-mail addresses: \href{mailto:kim296@csusm.edu}{\tt kim296@csusm.edu}, \href{mailto:mpicollelli@csusm.edu}{\tt mpicollelli@csusm.edu}}  \and Michael E. Picollelli\footnotemark[1]}
\date{}
\begin{document}

\maketitle

\begin{abstract}
A 2018 conjecture of Brewster, McGuinness, Moore, and Noel \cite{BMMN} asserts that for $k \ge 3$, if a graph has chromatic number greater than $k$, then it contains at least as many cycles of length $0 \bmod k$ as the complete graph on $k+1$ vertices. Our main result confirms this in the $k=3$ case by showing every $4$-critical graph contains at least $4$ cycles of length $0 \bmod 3$, and that $K_4$ is the unique such graph achieving the minimum.

We make progress on the general conjecture as well, showing that $(k+1)$-critical graphs with minimum degree $k$ have at least as many cycles of length $0\bmod r$ as $K_{k+1}$, provided $k+1 \ne 0 \bmod r$.  We also show that $K_{k+1}$ uniquely minimizes the number of cycles of length $1\bmod k$ among all $(k+1)$-critical graphs, strengthening a recent result of Moore and West \cite{MW} and extending it to the $k=3$ case.
\end{abstract}

\section{Introduction}

The study of cycles with a given length modulo an integer $k$ began with work of Burr and Erd\H{o}s \cite{Erd}, who conjectured that for odd values of $k$, graphs of sufficiently large average degree contain cycles of all lengths modulo $k$. This conjecture was proven by Bollob\'as \cite{Bol}, with the bound subsequently improved by Thomassen \cite{Thomassen}.  Further work, building in part on conjectures of Thomassen \cite{Thomassen}, has largely focused on the existence of such cycles under minimum degree or connectivity assumptions -- see, for example,  \cite{Dean}, \cite{Diwan}, \cite{GHLM}, or \cite{SV}.

This paper is concerned with bounding the number of cycles of length $0\bmod k$ and $1\bmod k$ in non-$k$-colorable graphs. The first result in this direction is due to Tuza \cite{Tuza}, who showed that graphs without cycles of length $1 \bmod k$ are $k$-colorable, generalizing K\"onig's classic characterization of bipartite graphs. For cycles of length $0\bmod k$, Chen and Saito \cite{ChenSaito} showed that graphs without cycles of length $0\bmod 3$ are $3$-colorable; Dean, Lesniak and Saito \cite{DLS} achieved the same conclusion for graphs without cycles of length $0\bmod 4$. Chen, Ma and Zang \cite{CMZ} established that any non-$k$-colorable graph is guaranteed cycles of every length $\ell \bmod k$ except possibly $\ell=2$, settling the existence case in the $\ell=0$ case.  Very recent work has essentially settled the existence question entirely: Gao, Huo, and Ma \cite{GHM} showed every $(k+1)$-critical non-complete graph has cycles of all lengths modulo $k$ when $k \ge 6$. The same conclusion for the $k=3$ case was implied by work in \cite{ChenSaito}, \cite{LY}, and \cite{Saito}, and the case where $k \in \{4,5\}$ is the subject of recent work of Huo \cite{Huo}.

The question of how many such cycles must occur has received considerably less attention. Brewster, McGuinness, Moore and Noel \cite{BMMN} generalized a proof of Chen and Saito's result due to Wrochna to show that any graph which is not $k$-colorable has at least $(k-1)!/2$ cycles of length $0\bmod k$. They further conjectured that the complete graph $K_{k+1}$ achieves the minimum number of such cycles among all non-$k$-colorable graphs.

\begin{conjecture}[Brewster, McGuinness, Moore, and Noel, \cite{BMMN}]
\label{conj:BMMN}
If $\chi(G)>k$, then $G$ contains at least $(k+1)(k-1)!/2$ cycles of length $0 \bmod k$.
\end{conjecture}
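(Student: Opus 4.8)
The plan is to reduce to the critical case and then count cycles through the local structure around a low-degree vertex. First I would observe that it suffices to prove the bound for $(k+1)$-critical graphs: if $\chi(G) > k$ then $G$ contains a $(k+1)$-critical subgraph $H$, and every cycle of $H$ is a cycle of $G$, so a lower bound for $H$ transfers to $G$. I would also pin down the target by computing the extremal count directly: since $K_{k+1}$ has only $k+1$ vertices and $2k > k+1$ for $k \ge 2$, its only cycles of length $0 \bmod k$ are its $k$-cycles, and there are $\binom{k+1}{k}(k-1)!/2 = (k+1)(k-1)!/2$ of these. This both fixes the quota to be met and clarifies what ``uniqueness of $K_{k+1}$'' should mean: any extra cycle structure must force a strictly larger count.

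The workhorse is the local structure at a low-degree vertex. Every $(k+1)$-critical graph is vertex-critical, so $\chi(G-v) = k$ for all $v$, and has minimum degree at least $k$. Fix a vertex $v$ of degree exactly $k$ with neighbors $v_1,\dots,v_k$ (this is the regime the minimum-degree-$k$ result and the $k=3$ theorem occupy). In any proper $k$-coloring of $G-v$ the neighbors of $v$ must receive all $k$ colors, since otherwise the coloring extends to $v$; so $v_1,\dots,v_k$ form a rainbow set. The cycles through $v$ are exactly the closures of paths from $v_i$ to $v_j$ in $G-v$ by the two edges $vv_i$ and $vv_j$, and such a cycle has length $0 \bmod k$ precisely when the path has length $\equiv -2 \bmod k$. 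So the heart of the matter is a counting statement: across the $\binom{k}{2}$ neighbor pairs there are at least $(k+1)(k-1)!/2$ such paths giving distinct cycles, together with whatever $0 \bmod k$ cycles avoid $v$.

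To produce these paths I would use the rainbow coloring together with Kempe-chain and connectivity arguments: for a pair $v_i, v_j$ the subgraph on the two colors $c(v_i), c(v_j)$, and finer alternating structures, force paths of controlled length modulo $k$. For $K_{k+1}$ each neighbor pair should contribute exactly the short paths that close to the $k$-cycles, so I would aim to show that any deviation from completeness --- a longer connecting path, a second path between some pair, or a cycle avoiding $v$ --- yields a strictly larger count, giving both the bound and the uniqueness of $K_{k+1}$. In the $k=3$ case the quota is only $4$: a path from $v_i$ to $v_j$ closes to a $0 \bmod 3$ cycle iff it has length $\equiv 1 \bmod 3$, so triangles through $v$ correspond to edges inside $N(v)$, and $K_4$ attains exactly $3 + 1 = 4$. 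Here I would try to show directly that a $4$-critical graph other than $K_4$ produces a fifth such cycle, splitting into the subcase where $v$ lies in few triangles (so some neighbor pair must be joined by a longer admissible path) and the subcase where $N(v)$ is rich.

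The hardest part, and the reason the full conjecture stays open, is twofold. First, the whole reduction hinges on a vertex of degree exactly $k$; when $G$ has minimum degree strictly greater than $k$ the rainbow pivot disappears and one must count cycles without a convenient local anchor, presumably via an ear decomposition or a global path-counting estimate that is far harder to make tight. (For $k=3$ the quota of $4$ is small enough that minimum degree at least $4$ can likely be dispatched by the sheer abundance of cycles, which is what makes the $k=3$ theorem reachable.) Second, even with the degree-$k$ pivot the path count must be made \emph{exactly} tight to isolate $K_{k+1}$, and the delicate combinatorics is in ensuring the $\binom{k}{2}$ pairs jointly meet the quota while the resulting cycles are genuinely distinct and not double-counted. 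I would therefore expect clean theorems in the $k=3$ case and in the minimum-degree-$k$ case, with the general conjecture needing genuinely new input.
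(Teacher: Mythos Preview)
The statement is a \emph{conjecture}; the paper does not prove it in full, and your proposal correctly anticipates this, predicting exactly the two partial results the paper actually establishes (the $k=3$ case and the minimum-degree-$k$ case) while flagging the general problem as open. So there is no ``proof in the paper'' to compare against; what can be compared is your proposed mechanism for the partial results versus what the paper actually does.

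Your mechanism is in the right family but is looser than the paper's. You propose to count, over neighbor pairs $(v_i,v_j)$, paths of length $\equiv -2 \bmod k$ in $G-v$, using two-color Kempe chains and ``finer alternating structures.'' The paper instead works with cyclic $r$-permutations $\sigma$ of the color set and the associated $\sigma$-subdigraph $D_\sigma$: for each color $i$ in $\sigma$ there is a directed $v_i,v_{\sigma^{-1}(i)}$-path in $D_\sigma$ (Lemma~2.3), and concatenating these yields closed directed walks whose cycles have length $0 \bmod r$. The payoff is an explicit \emph{injection} $f:\mathcal{C}_{\le k}(K_{k+1})\to \mathcal{C}(G)$ with $|V(f(C))|\equiv 0 \bmod |V(C)|$ (Theorem~2.4), from which the count $(k+1)(k-1)!/2$ drops out immediately. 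Your two-color Kempe chains do not by themselves control lengths modulo $k$ for $k>2$; the $\sigma$-subdigraph is precisely the ``finer alternating structure'' you gesture at, and organizing the count as an injection from $K_{k+1}$-cycles sidesteps the distinctness bookkeeping you worry about.

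For the $k=3$, $\delta(G)\ge 4$ case, your ``sheer abundance of cycles'' is too optimistic as stated: the paper needs two external ingredients, Chen--Saito's theorem that a graph with at most one vertex of degree $\le 2$ has a cycle of length $0\bmod 3$, and the Gao--Huo--Liu--Ma admissible-paths theorem, and then runs a genuine case analysis (Lemmas~4.2--4.5) to squeeze out a fifth good cycle. The uniqueness of $K_4$ in the $\delta=3$ case is handled separately (Lemma~4.1) by showing the four good cycles force a spanning chordless cycle structure that collapses to $K_4$.
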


We will establish this conjecture for $(k+1)$-critical graphs with minimum degree $k$ (see Corollary~\ref{cor:0modkwithmindeg}), but our main contribution is to settle it in the affirmative in the $k=3$ case.

\begin{theorem}\label{thm:mainresult}
If $G$ is $4$-critical, then $G$ has at least $4$ cycles of length $0 \mod 3$, with equality only for $K_4$.
\end{theorem}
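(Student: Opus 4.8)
The first step is to reformulate the statement in a form amenable to induction. Since $K_4$ contains exactly four cycles of length $0 \bmod 3$, namely its four triangles, and no longer such cycle, Theorem~\ref{thm:mainresult} is equivalent to showing that every $4$-critical graph other than $K_4$ contains at least five cycles of length $0 \bmod 3$. I would prove this by strong induction on $|V(G)|$, with small cases checked directly: $K_4$ is the unique $4$-critical graph on at most five vertices, and the next smallest (the odd wheels, beginning with $W_5$ on six vertices) already contain far more than five cycles of length $0 \bmod 3$. Throughout I would lean on the standard consequences of $4$-criticality: $\delta(G) \ge 3$, the graph is $2$-connected with no clique cutset, and, crucially, for any vertex $v$ of degree $3$ with $N(v) = \{a,b,c\}$, every proper $3$-coloring of $G - v$ assigns three distinct colors to $a,b,c$.

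The engine of the inductive step is to count cycles of length $0 \bmod 3$ through a fixed degree-$3$ vertex $v$, organizing the analysis via Gallai's description of the subgraph induced by the degree-$3$ vertices. A cycle through $v$ is precisely an $ab$-path, $bc$-path, or $ca$-path in $G - v$ together with the two edges at $v$, so such a path $P$ closes into a cycle of length $0 \bmod 3$ exactly when $|E(P)| \equiv 1 \bmod 3$. To force paths in the needed residue classes I would pass to auxiliary non-$3$-colorable graphs: identifying two neighbors, say $a$ and $b$, yields a non-$3$-colorable graph because no proper $3$-coloring of $G-v$ gives them a common color, and attaching short paths among $a,b,c$ produces further non-$3$-colorable graphs; applying the Chen--Saito theorem \cite{ChenSaito} and the refined existence result of Chen, Ma and Zang \cite{CMZ} then produces cycles of prescribed lengths modulo $3$ that pull back to paths among $a,b,c$ of the residues required to close up through $v$. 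Separately, when $G$ has a $2$-cut $\{x,y\}$ I would split along it into the Gallai--Dirac reduced parts, apply the induction hypothesis to the smaller critical graphs obtained, and reassemble the counts; the absence of clique cutsets forces $xy \notin E(G)$ and keeps this bookkeeping tractable.

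I expect two points to carry the real difficulty. The first is the residue bookkeeping across reductions: identifying or deleting vertices both creates and destroys cycles of length $0 \bmod 3$, and because $G$ is critical one cannot simply delete edges to expose non-$3$-colorable subgraphs, so each reduction must be accompanied by a careful accounting guaranteeing that at least five cycles survive and none is double-counted. The second, and genuinely harder, obstacle is the tightness analysis for uniqueness: one must show that failing to exhibit a fifth cycle forces the structure around $v$ to collapse to that of $K_4$. This in turn requires the dense case $\delta(G) \ge 4$ — where no degree-$3$ vertex is available to reduce on — to be dispatched by a separate argument, presumably an ear decomposition or a direct count showing the surplus edges already force many cycles. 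Pinning down that the sole extremal configuration is $K_4$, rather than some sporadic small graph, is where I anticipate the bulk of the casework will lie.
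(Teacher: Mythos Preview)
Your outline identifies the right case split ($\delta(G)=3$ versus $\delta(G)\ge 4$) and correctly isolates where the work lies, but as written it is a plan rather than a proof, and the places you flag as hard are exactly where a concrete argument is missing. The most serious gap is the $\delta(G)\ge 4$ case: ``presumably an ear decomposition or a direct count'' is not a method, and this case in fact absorbs the majority of the technical effort. The paper handles it by combining the Chen--Saito theorem (a graph with at most one vertex of degree $\le 2$ contains a cycle of length $0\bmod 3$) with the Gao--Huo--Liu--Ma admissible-paths theorem: it first shows that under the hypotheses every edge lies on exactly one good cycle and $G$ is $4$-regular, and then deletes the vertices of a carefully chosen edge or triangle and does a case analysis on whether the good cycle guaranteed by Chen--Saito in the remainder is confined to the deleted neighborhood. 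Nothing in your sketch approaches this.

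For the $\delta(G)=3$ case your identification/induction framework is shakier than it looks. Identifying two neighbors of a degree-$3$ vertex yields a non-$3$-colorable graph but not a $4$-critical one, so the induction hypothesis does not apply to it; Chen--Saito and Chen--Ma--Zang each give you \emph{one} cycle of the desired residue, not a count, so it is unclear how five distinct cycles emerge; and the Gallai--Dirac $2$-cut reduction only fires when $G$ fails to be $3$-connected, which need not happen. The paper avoids all of this. It uses the Moore--West $\sigma$-subdigraph (generalized Kempe chain) machinery to build, for a degree-$3$ vertex $v$ and a $3$-coloring $\varphi$ of $G-v$, an explicit injection from the four triangles of $K_4$ to good cycles of $G$: three through $v$ and one lying in $G-v$. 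Uniqueness is then a short chord argument: the cycle in $G-v$ must be spanning (else Theorem~\ref{thm:my-cycle} produces a fifth good cycle through any uncovered vertex), and any chord of a spanning colored cycle cuts off a fifth good cycle, forcing $G=K_4$. No induction, no identifications, no $2$-cut bookkeeping.
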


Building on the arguments in \cite{BMMN}, Moore and West \cite{MW} established bounds on the number of cycles of length $0 \bmod r$ and $1\bmod r$ in non-$k$-colorable graphs, where $3 \le r \le k$. While their bound on cycles of length $0\bmod k$ was the same as in \cite{BMMN}, they were able to bound the number of cycles of length $1\bmod k$ in non-$k$-colorable graphs below by $k!/2$. This is best possible as it is achieved by $K_{k+1}$. Their argument, a probabilistic application of Tuza's strengthening of Minty's Theorem, also established a structural condition for equality when $k\ge 4$.

\begin{theorem}[Moore and West \cite{MW}, Theorem 5]\label{thm:moorewestthm5}
For $k \ge 3$, a non-$k$-colorable graph has at least $k!/2$ cycles with lengths
congruent to 1 modulo $k$, with equality for $k \ge 4$ only when these cycles all have length
$k + 1$.
\end{theorem}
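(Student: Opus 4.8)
The plan is to recover the bound by a probabilistic averaging argument over random acyclic orientations, using Minty's Theorem together with its refinement by Tuza to guarantee the cycles we want to count. Recall that Minty's Theorem characterizes $k$-colorability via orientations: $\chi(G)\le k$ if and only if $G$ admits an orientation in which every cycle $C$ satisfies $\max(|C^+|,|C^-|)\le (k-1)\min(|C^+|,|C^-|)$, where $C^+,C^-$ are the forward and backward edges in a fixed traversal. Since $\chi(G)>k$, the contrapositive says that in \emph{every} acyclic orientation there is an unbalanced cycle. Call a cycle \emph{near-directed} in an acyclic orientation $D$ if it is a directed path closed off by a single reverse edge, so that $|C^-|=1$ and $|C^+|=|C|-1$; such a cycle is unbalanced exactly when its directed path has length $\ge k$. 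Tuza's strengthening of Minty's Theorem is the statement that one can in fact find a near-directed cycle whose directed path has length $\equiv 0 \bmod k$, so that $|C|\equiv 1\bmod k$. For $K_{k+1}$ these near-directed cycles are precisely its Hamiltonian cycles, of which there are $k!/2$, and this is the extremal configuration we must recover.

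First I would set up the experiment: assign each vertex an independent uniform rank and orient every edge from lower to higher rank, giving an (almost surely) acyclic orientation $D$. For a fixed cycle $C$ of length $\ell$, the probability that $C$ is near-directed in $D$ depends only on $\ell$ and equals $2/(\ell-1)!$: there are $\ell$ choices for the single reverse edge, and given that choice the remaining Hamiltonian path on $C$ must be rank-monotone, which happens with probability $2/\ell!$ (increasing or decreasing). Every cycle of length $\equiv 1\bmod k$ has $\ell\ge k+1$, so its near-directed probability is at most $2/k!$, with equality exactly when $\ell=k+1$. By Tuza's strengthening there is \emph{always} at least one near-directed cycle of length $\equiv 1\bmod k$, so the expected number of them is at least $1$. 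Writing $\mathcal C_1$ for the set of cycles of length $\equiv 1\bmod k$, linearity of expectation gives
\[
\sum_{C\in\mathcal C_1}\frac{2}{(|C|-1)!}\ \ge\ 1,
\qquad\text{so}\qquad
|\mathcal C_1|\cdot\frac{2}{k!}\ \ge\ \sum_{C\in\mathcal C_1}\frac{2}{(|C|-1)!}\ \ge\ 1 ,
\]
whence $|\mathcal C_1|\ge k!/2$. Moreover the first inequality is strict unless every $C\in\mathcal C_1$ has $|C|=k+1$, since any cycle of length $\ge 2k+1$ contributes the strictly smaller weight $2/(|C|-1)!<2/k!$; hence $|\mathcal C_1|=k!/2$ forces all such cycles to have length exactly $k+1$.

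The delicate step, and the one I expect to be the main obstacle, is establishing Tuza's strengthening itself: upgrading the Gallai--Roy/Minty guarantee of a directed path of length $\ge k$ (equivalently, an unbalanced near-directed cycle) to a near-directed cycle whose directed path has length \emph{exactly divisible by $k$}, so that the cycle length lands in $1\bmod k$. This modular control is what makes the per-cycle weights line up with the Hamiltonian cycles of $K_{k+1}$, and it is also the place where small $k$ behaves differently. For $k\ge 4$ the modular guarantee is rigid enough that the equality analysis above applies verbatim, forcing every extremal cycle to have length $k+1$; for $k=3$ the guaranteed near-directed structure is looser and the weights no longer separate the length-$(k+1)$ cycles sharply enough, which is exactly why Theorem~\ref{thm:moorewestthm5} asserts the equality characterization only for $k\ge 4$. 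I would therefore spend the bulk of the effort on the modular refinement of Minty's Theorem and on verifying that the equality chain remains tight only under the claimed hypothesis.
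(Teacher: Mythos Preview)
Your proposal is correct and reconstructs precisely the Moore--West argument that the paper cites but does not itself reproduce: probabilistic averaging over random acyclic orientations, with Tuza's refinement of Minty supplying the deterministic guarantee of a near-directed cycle of length $1\bmod k$ in every orientation. The paper does not prove Theorem~\ref{thm:moorewestthm5} directly; it instead proves the strengthening Theorem~\ref{thm:1modkbound} in Section~\ref{sec:1modk} by a constructive Kempe-chain argument. There one fixes a vertex $v$ in a $(k+1)$-critical $G$, a proper $k$-coloring $\varphi$ of $G-v$, and for each color $i$ and cyclic permutation $\sigma$ of $[k]$ finds a shortest directed $v$-to-$N_i$ path in the $\sigma$-subdigraph, closing it with the edge back to $v$ to obtain a cycle of length $1\bmod k$ through $v$. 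Counting over all $(i,\sigma)$ and halving for orientation reversal gives $k!/2$ such cycles, all containing $v$.

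The trade-off: your route is short once Tuza's orientation lemma is taken as a black box, but its equality analysis only tells you that every $1\bmod k$ cycle has length $k+1$. The paper's constructive route is more hands-on, but because every cycle it produces passes through the \emph{arbitrary} vertex $v$, equality forces every $1\bmod k$ cycle to be spanning; then the shortest $v$-to-$N_i$ path being Hamiltonian forces $|N_i|=1$, so $G$ is $k$-regular, and Brooks' Theorem identifies $G$ as $K_{k+1}$. That is a strictly sharper equality characterization, and it holds uniformly for $k\ge 3$. On that note, your closing diagnosis of the $k=3$ restriction is off: nothing in your weighting argument degrades at $k=3$ (the inequality $2/(|C|-1)!\le 2/k!$ and its equality case are identical there), so the restriction in Moore--West's statement is not explained by the weights failing to separate; the present paper simply removes it by taking the constructive route.
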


By modifying their arguments, we achieve a slight strengthening of their result that extends to the $k=3$ case, and relies on a simpler constructive argument.

\begin{theorem}\label{thm:1modkbound}
Let $k \ge 3$ and let $G$ be a $(k+1)$-critical graph. Then $G$ has at least $k!/2$ cycles of length $1 \bmod k$, with equality only for the complete graph $K_{k+1}$.\\
\end{theorem}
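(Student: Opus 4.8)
The plan is to pin down the extremal count by working locally at a single vertex. First I would record the benchmark: in $K_{k+1}$ the only cycles of length $1\bmod k$ are the Hamilton cycles, since any cycle has length between $3$ and $k+1$ and $k+1$ is the unique such length that is $\equiv 1 \bmod k$; there are exactly $(k+1)!/(2(k+1)) = k!/2$ of them. Crucially, every Hamilton cycle passes through every vertex, so I would aim to count cycles of length $1\bmod k$ through one fixed vertex $v$ and show this local count is already at least $k!/2$, with equality forcing $K_{k+1}$. Since $G$ is $(k+1)$-critical, $\delta(G)\ge k$, $G-v$ is $k$-colorable, and in every proper $k$-coloring of $G-v$ the neighborhood $N(v)$ meets all $k$ color classes; this ``rainbow neighborhood'' is the engine of the construction.

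Next I would set up the construction. Fix a proper $k$-coloring $c$ of $G-v$ and neighbors $w_1,\dots,w_k \in N(v)$ with $c(w_i)=i$. A cycle through $v$ is $v,w_i,\dots,w_j,v$ for a $w_i$--$w_j$ path $P$ in $G-v$; its length is $|P|+2$, so it is $\equiv 1\bmod k$ exactly when $|P|\equiv -1\bmod k$. In $K_{k+1}$ the relevant paths are precisely the $k!/2$ Hamilton paths of $K_k$ (length $k-1\equiv -1$, visiting every color once), so I would associate to each cyclic ordering of the $k$ colors, taken up to reversal so that there are $k!/2$ classes, a $w_i$--$w_j$ path that threads the color classes in that order and has length $\equiv -1\bmod k$. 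The connecting segments between consecutive prescribed colors would be built from the coloring---Kempe/alternating paths in a pair of classes, or rainbow subpaths---so that each segment length is pinned modulo $k$ and the total is $-1$. Distinctness of the resulting cycles would follow from the distinct sequences of colors they traverse, yielding $k!/2$ uniformly in $k$, and in particular settling the $k=3$ case that Theorem~\ref{thm:moorewestthm5} left open.

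The main obstacle is exactly this length control together with existence: unlike in $K_{k+1}$, the prescribed color-ordered paths need not exist as single edges, and inserting longer connecting paths can shift the total length away from $-1\bmod k$. I expect to handle this by passing from the coloring to the associated acyclic orientation (orient each edge toward the larger color), in the spirit of the Minty/Tuza machinery underlying Theorem~\ref{thm:moorewestthm5}, so that the residue modulo $k$ contributed by each segment is governed by the color increments rather than by the raw segment length; the contrast with Moore and West is that I would read off explicit cycles from this orientation instead of arguing probabilistically. Guaranteeing that every one of the $k!/2$ orderings is realizable---rather than merely bounding the total by an averaging argument---is where criticality and $\delta(G)\ge k$ must be used most carefully.

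Finally, for the equality characterization I would run the construction in reverse. If $G$ has exactly $k!/2$ cycles of length $1\bmod k$, then every color-ordering is realized by exactly one such cycle, and each realizing path must be shortest possible, of length exactly $k-1$ with no slack in any connecting segment; forcing zero slack makes every connecting segment a single edge, so $v$ together with $w_1,\dots,w_k$ induces a clique and $G$ has no further vertices or edges, whence $G\cong K_{k+1}$. The delicate points will be ruling out ``accidental'' extra cycles that might let $G$ be non-complete while still attaining the minimum, and verifying the rigidity argument by hand in the $k=3$ case, where the construction has the least room and Theorem~\ref{thm:moorewestthm5} provides no structural information.
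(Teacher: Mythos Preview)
Your high-level plan---count cycles of length $1\bmod k$ through a fixed vertex $v$ using a $k$-colouring of $G-v$, parametrised by colour orderings, then leverage equality at every vertex---is exactly the paper's strategy. But the two obstacles you flag are real gaps, and the paper closes them with tools your plan does not contain. For the construction, the paper neither threads pairwise Kempe chains nor uses the acyclic orientation from the colouring. Instead, for each $i\in[k]$ and each cyclic permutation $\sigma$ of all of $[k]$, it deletes the edges from $v$ to $N_i=\{w\in N(v):\varphi(w)=i\}$, assigns $\varphi(v)=i$, and works in the $\sigma$-subdigraph $D^i_\sigma$ (each edge oriented from colour $c$ to colour $\sigma(c)$). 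If $N_i$ were unreachable from $v$ in $D^i_\sigma$, shifting the reachable set by $\sigma$ would properly $k$-colour $G$; so a shortest directed $v$--$N_i$ path exists, and since it begins and ends at colour $i$ while tracking $\sigma$, its length is automatically $\equiv 0\bmod k$. Closing it with the deleted edge back to $v$ gives a cycle of length $1\bmod k$ whose colour sequence recovers $(i,\sigma)$ up to the involution $(i,\sigma)\leftrightarrow(\sigma(i),\sigma^{-1})$, yielding $k\cdot(k-1)!/2=k!/2$ distinct cycles. Length control is intrinsic to the $\sigma$-subdigraph; your ``residue governed by colour increments along an acyclic orientation'' does not give this, because increments along a path in the acyclic orientation do not determine the path's length modulo $k$.

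For the equality case, your argument lacks a mechanism: having exactly $k!/2$ such cycles does not by itself force the realising paths to have length $k-1$ (a path of length $2k-1$ would serve just as well in the construction). The paper instead notes that the local bound of $k!/2$ holds at \emph{every} vertex, so global equality forces every cycle of length $1\bmod k$ to be spanning. Then the shortest directed $v$--$N_i$ path in $D^i_\sigma$ is a Hamilton path of $G-v$, which forces $|N_i|=1$ for each $i$, hence $\deg(v)=k$; since $v$ was arbitrary, $G$ is $k$-regular, and Brooks' Theorem gives $G=K_{k+1}$. This bypasses the ``accidental extra cycles'' and ``rigidity by hand for $k=3$'' concerns entirely.
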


The remainder of this paper is organized as follows: in Section~\ref{sec:k+1general} we establish some results for $(k+1)$-critical graphs, followed by the proof of Theorem~\ref{thm:1modkbound} in Section~\ref{sec:1modk}.  Section~\ref{sec:thm1proof} is then focused on the proof of Theorem~\ref{thm:mainresult}.

\section{Cycles in $(k+1)$-critical graphs}
\label{sec:k+1general}

In this section we will establish our results on $(k+1)$-chromatic graphs. We remark that our notation largely follows \cite{WestCM}, and we note that a graph $G$ is {\bf $(k+1)$-critical} if $G$ has chromatic number $k+1$ but every proper subgraph has chromatic number at most $k$.

The results in this section are primarily established by modifying the approach to generalized Kempe chains used by Moore and West in \cite{MW}.  To that end, we will use the the following definition from \cite{MW}:

\begin{definition}[Moore and West \cite{MW}]
    Let $G$ be a $k$-colorable graph and let $\varphi$ be a proper $k$-coloring of $G$, where we assume the set of colors is $[k]=\{1,2,\ldots,k\}$.  For some $r$, $3 \le r \le k$, let $\sigma$ be a cyclic permutation of $r$ distinct elements from $[k]$. Then we define the {\bf $\sigma$-subdigraph $D_{\sigma}$} to be the directed graph satisfying
\begin{enumerate}
    \item $V(D_{\sigma})=V(G)$.
    \item For $u,v \in V(G)$, $uv \in E(D_{\sigma})$ if and only if $uv \in E(G)$ and $\sigma(\varphi(u)) = \varphi(v)$.
\end{enumerate}
\end{definition}

For $v \in V(G)$, let $A_v$ be the set of vertices accessible from $v$ along directed paths in $D_{\sigma}$. A key observation in \cite{MW} is that we may ``shift" the colors in $A_v$ by recoloring each $w \in A_v$ with the color $\sigma(\varphi(w))$; the resulting $k$-coloring will still be a proper. Otherwise, a monochromatic edge $xy$ under the recoloring would have one end $x \in A_v$ and the other end $y \notin A_v$, and must satisfy $\sigma(\varphi(x))=\varphi(y)$, contradicting $y \notin A_v$.

As an illustration, we begin with a slight strengthening of Theorem~4 from \cite{MW} that bounds the number of cycles of a given modular length in a graph $G$ such that $G$ is $(k+1)$-chromatic but $G-e$ is not.  We show further that the bound holds on the number of cycles through each vertex of $e$, although we do not claim that all such cycles found contain both vertices of $e$.  We will use this stronger conclusion later in the proof of Theorem~\ref{thm:mainresult}.

\begin{theorem}[See Moore and West \cite{MW}, Theorem~4]
    \label{thm:my-cycle}
    Fix $r,k \in \mathbb{N}$ with $3 \le r \le k$, and let $e$ be an edge in a graph $G$.  If $\chi(G)=k+1$ but $\chi(G-e)=k$, then letting $x$ be an endpoint of $e$, $G-e$ contains at least $\frac{1}{2} \prod_{i=1}^{r-1} (k-i)$ cycles of length $0\bmod r$.
\end{theorem}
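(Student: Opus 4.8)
The plan is to fix a proper $k$-coloring $\varphi$ of $G-e$ and exploit that $\chi(G)=k+1$ to force structure on the $\sigma$-subdigraphs. First I would observe that in every proper $k$-coloring of $G-e$ the endpoints $x$ and $y$ of $e$ must receive the same color: otherwise that coloring would remain proper after adding $e$ back, contradicting $\chi(G)=k+1$. Write $c=\varphi(x)=\varphi(y)$. I would then range over all cyclic permutations $\sigma$ of $r$ distinct colors from $[k]$ whose support contains $c$; a direct count shows there are exactly $\binom{k-1}{r-1}(r-1)!=\prod_{i=1}^{r-1}(k-i)$ of these. The goal is to assign to each such $\sigma$ a cycle of length $0\bmod r$ through $x$ in $G-e$, with the assignment at most two-to-one.

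For a fixed such $\sigma$, the key step is to show that $x$ and $y$ are mutually reachable in $D_\sigma$. Using the shifting observation recorded above, if $y\notin A_x$ then recoloring every vertex of $A_x$ by $w\mapsto\sigma(\varphi(w))$ yields a proper $k$-coloring of $G-e$ in which $x$ now has color $\sigma(c)\neq c$ while $y$ retains color $c$; since $x$ and $y$ then differ, this coloring is proper on all of $G$, contradicting $\chi(G)=k+1$. Hence $y\in A_x$, and the symmetric argument applied to $A_y$ gives $x\in A_y$. Thus there is a directed closed walk through $x$ of positive length, and a shortest such walk is a directed cycle $C$ through $x$. Because consecutive colors along any edge of $D_\sigma$ advance by $\sigma$, and $\sigma$ acts as an $r$-cycle on its support, traversing $C$ returns the color of $x$ to itself only after a multiple of $r$ steps; hence $C$ has length $0\bmod r$. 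Finally, $C$ uses only edges of $D_\sigma\subseteq G-e$ and therefore avoids $e$.

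The remaining, and I expect most delicate, step is the counting. I would recover $\sigma$ from the pair $(C,\varphi)$: the colors appearing on $C$ are exactly the support of $\sigma$, and orienting $C$ as a directed cycle in $D_\sigma$ reads off $\sigma$ on its support as the successor map of colors. The two orientations of $C$ yield $\sigma$ and $\sigma^{-1}$, and since $\sigma$ has order $r\ge 3$ we have $\sigma\neq\sigma^{-1}$, so $C$ determines the unordered pair $\{\sigma,\sigma^{-1}\}$ and nothing more. Consequently the map sending each $\sigma$ to a chosen such $C$ is at most two-to-one onto its image, giving at least $\frac{1}{2}\prod_{i=1}^{r-1}(k-i)$ distinct cycles of length $0\bmod r$ through $x$ in $G-e$. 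The main obstacles to watch are ensuring the extracted closed walk can be taken to pass through $x$ (handled by minimality) and verifying that distinct permutations genuinely yield distinct \emph{undirected} cycles rather than merely distinct directed ones; the symmetric roles of $x$ and $y$ then give the stated strengthening to cycles through either endpoint.
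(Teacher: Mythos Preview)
Your proof is correct and follows essentially the same route as the paper: fix a proper $k$-coloring of $G-e$, observe $\varphi(x)=\varphi(y)$, and for each cyclic $r$-permutation $\sigma$ through that color use the shifting argument to force $x$ and $y$ to be mutually reachable in $D_\sigma$, then extract a directed cycle through $x$ and argue the assignment $\sigma\mapsto C$ is at most two-to-one. The only cosmetic difference is that you extract the cycle as a shortest closed walk through $x$, whereas the paper concatenates the $x\to y$ and $y\to x$ paths and decomposes the resulting closed walk into directed cycles; both devices work equally well here.
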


\begin{proof}
    Letting $e = xy$, we fix a proper $k$-coloring $\varphi$ of $G-e$ using colors $1,2,\ldots,k$ and let $\sigma$ be a cyclic $r$-permutation of $[k]$ that includes $\varphi(x)$. We note that $\varphi(x)=\varphi(y)$ must hold, else $\varphi$ is a proper $k$-coloring of $G$.

    Consider the $\sigma$-subdigraph $D_{\sigma}$ of $G$ under $\varphi$ and $\sigma$, and let $A_x$ be the set of vertices accessible from $x$ via directed paths.  By construction of $D_{\sigma}$, the mapping $\varphi'$ formed by setting $\varphi'(v) = \sigma(\varphi(v))$ for $v \in A_x$ and $\varphi'(v)=\varphi(v)$ otherwise is a proper coloring.

    If $y \notin A_x$ then $\varphi'$ yields a proper $k$-coloring of $G$, a contradiction.  Therefore $D_{\sigma}$ contains a directed path $P$ from $x$ to $y$.  By symmetry, it also contains a directed path $Q$ from $y$ to $x$. The concatenation of $P$ and $Q$ yields a closed directed walk in $D_{\sigma}$. It's routine to show that the edges traversed by a closed directed walk can be decomposed into directed cycles, so every vertex on $P$ and $Q$ lies on a directed cycle in $D_{\sigma}$.

    Thus, $D_{\sigma}$ has a directed cycle containing $x$: let $C_{\sigma}$ be its underlying cycle in $G$.    Every directed cycle in $D_{\sigma}$ visits every color from $\sigma$ in order, implying that $C_{\sigma}$ has length $0\bmod r$, and thus the only other $r$-permutation which could produce the same cycle is the inverse $\sigma^{-1}$.  Consequently, the number of cycles found is at least half the number of cyclic $r$-permutations, $\frac{1}{2}\prod_{i=1}^{r-1} (k-i)$.
\end{proof}

As mentioned above, this bound generalizes a result from \cite{BMMN} in the case $r=k$ that establishes that $(k+1)$-chromatic graphs contain at least $(k-1)!/2$ cycles of length $0\bmod k$.  Our goal now is to show the stronger bound of Conjecture~\ref{conj:BMMN} holds for $(k+1)$-critical graphs of minimum degree $k$. We remark that while there exist $(k+1)$-critical graphs of arbitrarily large minimum degree for all $k \ge 3$, we are unaware of any results estimating what proportion of $(k+1)$-critical graphs have minimum degree greater than $k$.  Since the minimum-degree-$k$ case appears to be very `typical' in the literature, and as such graphs are easily produced through the Haj\'os construction, we still think this result is of interest.

We begin with a technical lemma.

\begin{lem}
\label{lem:technical}
    Let $k \ge 3$ and let $G$ be a $(k+1)$-critical graph with $\delta(G)=k$. Let $v$ be a vertex of degree $k$ in $G$ with neighbors $v_1,\ldots,v_k$, and let $\varphi$ be a proper $k$-coloring of $G-v$ satisfying $\varphi(v_i)=i$.  Finally, let $\sigma$ be a cyclic $r$-permutation of elements in $[k]$.  Then the $\sigma$-subdigraph of $G-v$ under $\varphi$ contains directed $v_i,v_{\sigma^{-1}(i)}$-paths $R^i$ for each color $i$ in $\sigma$.
\end{lem}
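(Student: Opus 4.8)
The plan is to argue one color at a time, using the color-shifting operation on the accessible sets $A_{v_i}$ together with the hypothesis that $G$ is not $k$-colorable. First I would record the key consequence of criticality: since $G-v$ is $k$-colorable but $G$ is not, and $v$ has exactly $k$ neighbors, \emph{every} proper $k$-coloring of $G-v$ must use all $k$ colors on $N(v)=\{v_1,\ldots,v_k\}$ --- otherwise a missing color could be assigned to $v$ to properly $k$-color $G$, a contradiction. This holds for the coloring $\varphi$ itself (with $\varphi(v_i)=i$) and, crucially, for any coloring obtained from $\varphi$ by a shift.

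Fix a color $i$ appearing in $\sigma$, and let $A_{v_i}$ be the set of vertices of $G-v$ accessible from $v_i$ along directed paths in the $\sigma$-subdigraph $D_\sigma$. By the shifting observation recalled earlier, recoloring each $w\in A_{v_i}$ to $\sigma(\varphi(w))$ while leaving every other vertex fixed yields a proper $k$-coloring $\varphi'$ of $G-v$. Since $v_i\in A_{v_i}$, we have $\varphi'(v_i)=\sigma(i)$, and because $i$ lies in the support of the $r$-cycle $\sigma$ with $r\ge 3$, we have $\sigma(i)\ne i$; thus $v_i$ no longer carries color $i$ under $\varphi'$.

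Next I would apply the palette observation to $\varphi'$: color $i$ must still appear on some neighbor $v_j$. I claim the only candidate is $v_{\sigma^{-1}(i)}$. Indeed, if $v_j\notin A_{v_i}$ then $\varphi'(v_j)=j$, so $\varphi'(v_j)=i$ would force $j=i$, i.e.\ $v_j=v_i\in A_{v_i}$, a contradiction. Hence $v_j\in A_{v_i}$ and $\varphi'(v_j)=\sigma(j)=i$, which forces $j=\sigma^{-1}(i)$ (a well-defined color in the support of $\sigma$). Therefore $v_{\sigma^{-1}(i)}\in A_{v_i}$, meaning $D_\sigma$ contains a directed path $R^i$ from $v_i$ to $v_{\sigma^{-1}(i)}$, exactly as required. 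Running this for every color $i$ in $\sigma$ produces all the desired paths.

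The argument is essentially forced once the setup is in place, so I do not anticipate a serious obstacle; the one point demanding care is the bookkeeping around the support of $\sigma$ --- verifying that $v_i$ genuinely loses color $i$ (using $\sigma(i)\ne i$) and that $v_{\sigma^{-1}(i)}$ is the \emph{unique} neighbor able to recover color $i$ under $\varphi'$. One should also note $v_{\sigma^{-1}(i)}\ne v_i$ (again since $\sigma$ cyclically permutes $r\ge 3$ elements), so that each $R^i$ is a nontrivial directed path.
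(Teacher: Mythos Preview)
Your argument is correct and uses the same Kempe-chain shifting mechanism as the paper. The only difference is presentational: the paper extends $\varphi$ to $G-vv_i$ by coloring $v$ with $i$ and then argues that $v$ must be reachable from $v_i$ in the resulting $\sigma$-subdigraph (whence the predecessor of $v$ is forced to be $v_{\sigma^{-1}(i)}$), whereas you stay in $G-v$ and use the palette constraint on $N(v)$ to force $v_{\sigma^{-1}(i)}\in A_{v_i}$ directly.
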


\begin{proof}
     Fixing $i$ in $\sigma$, we extend $\varphi$ to a proper $k$-coloring $\varphi^i$ of $G-vv_i$ by setting $\varphi^i(v)=i$. Let $D^i_{\sigma}$ denote the $\sigma$-subdigraph of $G-vv_i$ under $\varphi^i$, noting that it contains the $\sigma$-subdigraph of $G-v$ under $\varphi$. We claim that $D_{\sigma}^i$ contains a directed path $Q$ from $v_i$ to $v$. Otherwise, we can shift the colors of the vertices accessible from $v_i$ according to $\sigma$, yielding a proper $k$-coloring of $G$, a contradiction.

     By definition of $D_{\sigma}^i$, the vertex preceding $v$ on $Q$ must be $v_{\sigma^{-1}(i)}$, and therefore $R^i=Q-v$ is a directed $v_i,v_{\sigma^{-1}(i)}$-path lying in the $\sigma$-subdigraph of $G-v$ as claimed.
\end{proof}

Our bound on the number of cycles of length $0 \bmod k$ will follow immediately from the next result.  To state it, we introduce some further notation: for a graph $H$, let $\mathcal{C}(H)$ denote the set of all cycles in $H$, and $\mathcal{C}_{\le i}(H)$ the set of cycles of length at most $i$.

\begin{theorem}
    \label{thm:injectivity}
    Let $k \ge 3$ and let $G$ be a $(k+1)$-critical graph with $\delta(G)=k$.  Then there exists an injective mapping $f:\mathcal{C}_{\le k}(K_{k+1}) \to \mathcal{C}(G)$ such that $\vert V(f(C)) \vert \equiv 0 \bmod \vert V(C) \vert$.
\end{theorem}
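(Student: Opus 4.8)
The plan is to fix a vertex $v$ of degree $k$, which exists since $\delta(G)=k$, label its neighbors $v_1,\dots,v_k$, and take a proper $k$-coloring $\varphi$ of $G-v$. Because $G$ is $(k+1)$-critical, $G-v$ is $k$-colorable but $v$ itself cannot be colored, so the $v_i$ must receive all $k$ colors; after relabeling we may assume $\varphi(v_i)=i$, exactly the hypothesis of Lemma~\ref{lem:technical}. Throughout I will lean on one rigidity observation: in any $\sigma$-subdigraph $D_\sigma$ every directed edge advances the color by $\sigma$, so the length of \emph{any} directed $v_i,v_j$-path is congruent modulo $r$ to the number of $\sigma$-steps from $i$ to $j$, a quantity depending only on the two endpoint colors and not on the path chosen. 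This is what makes the modular length condition essentially automatic, leaving injectivity as the real content.

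I would define $f$ cycle-by-cycle after identifying $V(K_{k+1})$ with $\{v,v_1,\dots,v_k\}$ and splitting $\mathcal{C}_{\le k}(K_{k+1})$ by whether a cycle uses the distinguished vertex $v$. For a cycle $C$ \emph{avoiding} $v$, with color set $S$ of size $r$ and cyclic order inducing a cyclic $r$-permutation $\sigma$ of $S$, I would chain the directed paths $R^a$ supplied by Lemma~\ref{lem:technical}: since $R^a$ runs from $v_a$ to $v_{\sigma^{-1}(a)}$, successive paths concatenate into a closed directed walk in $D_\sigma$ consisting of $r$ such paths, of total length $\equiv -r\equiv 0\bmod r$. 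Decomposing this closed walk into directed cycles, each directed cycle in $D_\sigma$ cycles through all colors of $\sigma$ in order and hence has length $\equiv 0\bmod r$; I take a canonically chosen one (e.g.\ the directed cycle through $v_a$ for the least $a\in S$) to be $f(C)$, whose length is $\equiv 0 \equiv 0\bmod\lvert V(C)\rvert$.

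For a cycle $C$ \emph{through} $v$ of length $r$, write its color sequence as $c_0,c_1,\dots,c_{r-2}$ (so $v$ joins $v_{c_{r-2}}$ back to $v_{c_0}$), using $r-1$ colors. Since $r\le k$ there is at least one spare color $c_{r-1}$, and I form the cyclic $r$-permutation $\sigma=(c_0\,c_1\cdots c_{r-2}\,c_{r-1})$. Concatenating the two paths $R^{c_0}$ and $R^{c_{r-2}}$ from Lemma~\ref{lem:technical} yields a directed walk, hence a directed path, from $v_{c_0}$ to $v_{\sigma^{-2}(c_0)}=v_{c_{r-2}}$, which by the rigidity observation has length $\equiv r-2\bmod r$. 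Appending the two edges $vv_{c_0}$ and $v_{c_{r-2}}v$ (both present, as these are neighbors of $v$) closes this into a cycle of $G$ through $v$ of length $\equiv (r-2)+2\equiv 0\bmod r$; this is $f(C)$. Note that the restriction to $\mathcal{C}_{\le k}$ is precisely what guarantees a spare color in this case and enough colors in the previous one, since a Hamilton cycle of $K_{k+1}$ would demand $k+1$ colors or leave no spare.

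The main obstacle is injectivity. The modular congruence is handed to us by color rigidity, but I must argue no two cycles of $K_{k+1}$ share an image. Membership of $v$ separates the two types, so collisions can only occur within a type. For the $v$-avoiding type, the underlying undirected cycle $f(C)$ lives in $D_\sigma$, visits exactly the color set $S=V(C)$, and its consecutive colors recover the unordered pair $\{\sigma,\sigma^{-1}\}$, hence the undirected cycle $C$. For the type through $v$, the two neighbors of $v$ on $f(C)$ pin down $c_0$ and $c_{r-2}$, and reading colors along the cycle is intended to recover the pattern $c_0,\dots,c_{r-2}$ and thus $C$. The delicate point I expect to fight hardest is that the concatenated path may be \emph{longer} than $r-2$, so it can wrap through the auxiliary color $c_{r-1}$ and revisit colors; making the extraction canonical and proving that the recovery map is well defined and genuinely inverts $f$ across all $r$ simultaneously is where the argument must be pinned down carefully.
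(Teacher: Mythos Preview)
Your proposal is essentially the paper's proof: fix a degree-$k$ vertex $v$, color $G-v$, split cycles of $K_{k+1}$ by whether they use the distinguished vertex, and in each case concatenate the paths $R^i$ from Lemma~\ref{lem:technical} into a closed directed walk from which a cycle of the right modular length is extracted. Your rigidity observation is exactly the mechanism the paper uses, and your handling of the two cases (including the trick of introducing a spare color for cycles through $v$) mirrors the paper's construction. Two small corrections: in the through-$v$ case the paths you want to concatenate are $R^{c_0}$ and $R^{c_{r-1}}$, not $R^{c_{r-2}}$, since $R^{c_0}$ lands at $v_{\sigma^{-1}(c_0)}=v_{c_{r-1}}$; and ``directed walk, hence directed path'' is not valid as stated --- you must shortcut to a path, after which your rigidity observation still gives the correct length modulo $r$.

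On the point you flag as hardest: the paper resolves it cleanly, and your setup already contains the key. From the cycle $f(C)$ through $v$, the path $f(C)-v$ either has no repeated color under $\varphi$, in which case its color sequence is literally $c_0,\dots,c_{r-2}$ and $C$ is read off directly; or it does repeat, in which case the minimal period of the color sequence recovers $r$, the first $r-1$ colors give the sequence $c_0,\dots,c_{r-2}$, and the $r$th color is forced to be the spare $c_{r-1}$. Either way the data $(r;c_0,\dots,c_{r-2})$ is determined up to reversal, hence so is $C$. So your worry is well placed but the recovery map you sketch does go through; no additional idea is needed beyond the period argument.
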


\begin{proof}
    We begin by taking $K_{k+1}$ to have vertex set $[k+1]$.  Let $v$ be a vertex in $G$ of degree $k$, and let $v_1,\ldots,v_k$ be its neighbors. Since $G$ is $(k+1)$-critical, let $\varphi$ be a proper $k$-coloring of $G-v$ using colors $1,2,\ldots,k$. Since each neighbor $v_i$ of $v$ must receive a different color under $\varphi$, without loss of generality we assume $\varphi(v_i)=i$.

    We turn now to defining our mapping $f$. Let $C$ be a cycle in $K_{k+1}$ of length $r  \le k$.  Our construction of $f(C)$ differs depending on whether or not $k+1 \in V(C)$.  \\

    Suppose first that $k+1 \notin V(C)$: we orient $C$ into a directed cycle, and let $\sigma$ denote the corresponding cyclic $r$-permutation of $[k]$.  Consider  the $\sigma$-subdigraph of $G-v$ under $\varphi$: by Lemma~\ref{lem:technical}, for each color $i$ in $\sigma$ it contains a directed $v_i,v_{\sigma^{-1}(i)}$-path $R^i$.  The concatenation of these $r$ paths, in the order $(R^i, R^{\sigma^{-1}(i)}, R^{\sigma^{-1}(\sigma^{-1}(i))}, \ldots, R^{\sigma(i)})$, produces a closed directed walk (see Figure~\ref{fig:fruit1}), which must include the edges of at least one directed cycle $\widehat{C}$.  We note that $\widehat{C}$ must follow the colors of $\sigma$ in order, so letting $f(C)$ be its underlying cycle in $G-v$, $f(C)$ has length $0\bmod r$, as required.\\

    Suppose instead that $k+1 \in V(C)$: we construct $f(C)$ to include vertex $v$ as follows.  As above, we orient $C$ into a directed cycle and let $\sigma$ be the corresponding cyclic $r$-permutation, which includes color $k+1$.  Since $r \le k$, let $i \in [k+1]$ be a color {\em not} on $\sigma$ and let $\sigma'$ be the cyclic $r$-permutation of $[k]$ formed by replacing $k+1$ with $i$.

    We next extend $\varphi$ to a proper $k$-coloring $\varphi^i$ of $G-vv_i$ by setting $\varphi^i(v)=i$, and let $D_{\sigma'}$ denote the $\sigma'$-subdigraph of $G-vv_i$ under $\varphi^i$.  By Lemma~\ref{lem:technical}, it contains directed paths $R^{\sigma'(i)}$ and $R^i$ from $v_{\sigma'(i)}$ to $v_i$ and $v_i$ to $v_{\sigma'^{-1}(i)}$, respectively. Thus, the concatenation $(v, R^{\sigma'(i)}, R^i, v)$ is a closed directed walk in $D_{\sigma'}$, and therefore contains the edges of a directed cycle $\widehat{C}$ through vertex $v$. Let $f(C)$ be the underlying cycle of $\widehat{C}$, noting that $f(C)$ has length $0 \bmod r$.\\

    Turning to injectivity, suppose that $f(C^1)=f(C^2)$ for some $C^1, C^2 \in \mathcal{C}^{\le k}(K_{k+1})$, and let $\sigma_1,\sigma_2$ be the cyclic permutations of $C^1,C^2$ used in the construction above.  We observe that $v \in V(f(C))$ if and only if $k+1 \in V(C)$, and we first suppose that $v \notin V(f(C^1))$.  Then under $\varphi$, $f(C^1)$ can be oriented into a directed cycle so that it follows the colors of $\sigma_1$ in order, and it can be oriented so that follows the colors of $\sigma_2$ in order.  But this implies that either $\sigma_1=\sigma_2$, or $\sigma_2$ follows the colors of $\sigma_1$ in the reverse order ($\sigma_2=\sigma_1^{-1}$), and in either case $C^1=C^2$.

    Suppose instead that $v \in f(C^1)$, and let $\sigma_1'$ and $\sigma_2'$ be the cyclic permutations in $[k]$ formed above from $\sigma_1,\sigma_2$ by replacing $k+1$ with some colors $i_1,i_2$, respectively. Then $f(C^1)$ can be oriented into directed cycle(s) $\widehat{C}^1,\widehat{C}^2$ so that by giving vertex $v$ color $i_j$, $\widehat{C}^j$ follows the colors of $\sigma_j'$ in order. If $f(C^1)-v$ has no repeated color under $\varphi$, the directed path $\widehat{C}^j-v$ follows the colors of the permutation $\sigma_j'-\{i_j\}=\sigma_j-\{k+1\}$ in order.  Thus, either $\sigma_2=\sigma_1$ or $\sigma_2=\sigma_1^{-1}$, and $C^1=C^2$ follows.

    If, instead, $f(C^1)-v$ has repeated colors under $\varphi$, then letting $r$ be the least length between repeated entries, it must hold that $\sigma_1',\sigma_2'$ are cyclic $r$-permutations, the first $r-1$ entries of $\widehat{C}^j-v$ form the permutation $\sigma'_j-\{i_j\}$, and  the $r$th vertex of $\widehat{C}^j-v$ is $i_j$.  But this implies $i_1=i_2$, and that either $\sigma_2'=\sigma_1'$ or $\sigma_2'=\sigma_1'^{-1}$, which then implies either $\sigma_2=\sigma_1$ or $\sigma_2=\sigma_1^{-1}$, and $C^1=C^2$ follows, completing the proof that $f$ is injective.
\end{proof}

    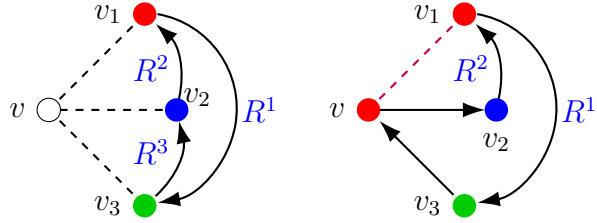
\begin{figure}[]
        \centering

    \tikzmath{\x1 = -4;\x2 = -5;}
   \begin{tikzpicture}[scale=0.85]

            \node (v1) at (\x1,0) {};
            \node (v2) at (\x1+2,0) {};
            \node (v3) at (\x1+1.5,1.5) {};
            \node (v4) at (\x1+1.5,-1.5) {};
            \node (r1) at (\x1+3.3,0) {${\color{blue} R^1}$};
            \node (r2) at (\x1+1.6,.65) {${\color{blue} R^2}$};
            \draw [thick,-{Latex[length=3mm, width=2mm]}] (v1) edge (v2);
            \draw [thick,-{Latex[length=3mm, width=2mm]}] (v4) edge (v1);
            \draw [dashed,thick,purple] (v1) edge (v3);
            \draw [bend right,-{Latex[length=3mm, width=2mm]}, thick] (v2) edge (v3);
            \draw [thick,-{Latex[length=3mm, width=2mm]}] (-2.3,1.5) arc (80:-80:1.5);
            \node [circle, fill=red,scale=0.8,label=left:$v$] at (\x1,0) {};
            \node [circle, fill=blue,scale=0.8,label=below:$v_2$] at (\x1+2,0) {};
            \node [circle, fill=red,scale=0.8,label=left:$v_1$] at (\x1+1.5,1.5) {};
            \node [circle, fill=green!80!black,scale=0.8,label=left:$v_3$] at (\x1 + 1.5,-1.5) {};

            \node (v11) at (\x1+\x2,0) {};
            \node (v12) at (\x1+\x2+2,0) {};
            \node (v13) at (\x1+\x2+1.5,1.5) {};
            \node (v14) at (\x1+\x2+1.5,-1.5) {};
            \node (r1) at (\x1+\x2+3.3,0) {${\color{blue} R^1}$};
            \node (r2) at (\x1+\x2 + 1.6,.65) {${\color{blue} R^2}$};
            \node (r3) at (\x1+\x2 + 1.6,-.65) {${\color{blue} R^3}$};
            \draw [thick,-{Latex[length=3mm, width=2mm]}] (\x1+\x2+1.7,1.5) arc (80:-80:1.5);
            \draw [dashed,thick] (v11) edge (v12);
            \draw [dashed,thick] (v11) edge (v13);
            \draw [dashed,thick] (v11) edge (v14);
            \draw [bend right,-{Latex[length=3mm, width=2mm]}, thick] (v14) edge (v12);
            \draw [bend right,-{Latex[length=3mm, width=2mm]}, thick] (v12) edge (v13);
            \node [circle,draw=black,fill=white,scale=0.8,label=left:$v$] at (\x1+\x2,0) {};
            \node [circle, fill=blue,scale=0.8] at (\x1+\x2+2,0) {};
            \node [circle,label=right:$v_2$] at (\x1+\x2+1.7,.2) {};
            \node [circle, fill=red,scale=0.8,label=left:$v_1$] at (\x1+\x2+1.5,1.5) {};
            \node [circle, fill=green!80!black,scale=0.8,label=left:$v_3$] at (\x1+\x2+1.5,-1.5) {};
        \end{tikzpicture}

        \caption{In the proof of Theorem~\ref{thm:injectivity} in the case $k=3$, we give a view of the closed directed walks used to construct the cycles $f(C)$.  On the left we have the case where $C$ is the triangle on $\{1,2,3\}$ with $\sigma=(1\,2\,3)$, and on the right, the case where $C$ is the triangle on $\{2,3,4\}$ where we take $\sigma=(1\,2\,4)$ and then let $\sigma'=(2\,3\,1)=(1\,2\,3)$.  We remark that despite the depiction, the paths $R^i$ may not be vertex-disjoint.}
        \label{fig:fruit1}
    \end{figure}

    As an immediate consequence we obtain the following corollary.

\begin{corollary}
\label{cor:0modkwithmindeg}
    Let $k \ge 3$ and let $G$ be a $(k+1)$-critical graph with $\delta (G) = k$. Then for each $r$, $2 \le r \le k$, with $k+1 \ne 0 \bmod r$, $G$ contains at least as many cycles of length $0\bmod r$ as the complete graph $K_{k+1}$.  In particular, $G$ has at least $(k+1)(k-1)!/2$ cycles of length $0 \bmod k$.
\end{corollary}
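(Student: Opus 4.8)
The plan is to read this off directly from the injection $f$ of Theorem~\ref{thm:injectivity}, with essentially no additional work beyond bookkeeping. Fix $r$ with $2 \le r \le k$ and $k+1 \not\equiv 0 \bmod r$. The first step is to check that every cycle of $K_{k+1}$ whose length is $\equiv 0 \bmod r$ actually lies in the domain $\mathcal{C}_{\le k}(K_{k+1})$ on which $f$ is defined. This is exactly where the hypothesis $k+1 \not\equiv 0 \bmod r$ enters: any cycle in $K_{k+1}$ has length at most $k+1$, and since $k+1 \not\equiv 0 \bmod r$, a cycle of length $\equiv 0 \bmod r$ cannot have length exactly $k+1$, so its length is at most $k$.

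The second step is to verify that $f$ sends such cycles to cycles of the correct residue in $G$. If $C$ is a cycle of $K_{k+1}$ with $|V(C)| \equiv 0 \bmod r$, then by Theorem~\ref{thm:injectivity} we have $|V(f(C))| \equiv 0 \bmod |V(C)|$, i.e.\ $|V(C)|$ divides $|V(f(C))|$; since $r$ divides $|V(C)|$, transitivity of divisibility gives $r \mid |V(f(C))|$, so $f(C)$ has length $\equiv 0 \bmod r$. Restricting the injective map $f$ to the set of cycles of $K_{k+1}$ of length $\equiv 0 \bmod r$ (a subset of $\mathcal{C}_{\le k}(K_{k+1})$ by the first step) therefore yields an injection into the set of cycles of $G$ of length $\equiv 0 \bmod r$. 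Injectivity immediately gives that $G$ has at least as many cycles of length $0 \bmod r$ as $K_{k+1}$, proving the first assertion.

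For the ``in particular'' clause I would specialize to $r = k$, which is admissible since $k+1 \equiv 1 \not\equiv 0 \bmod k$ for $k \ge 2$. The only cycles of $K_{k+1}$ of length $\equiv 0 \bmod k$ are those of length exactly $k$, because the next multiple $2k$ exceeds $k+1$ for all $k \ge 2$; these number $\binom{k+1}{k}(k-1)!/2 = (k+1)(k-1)!/2$, as each is specified by a choice of $k$ of the $k+1$ vertices together with one of the $(k-1)!/2$ cyclic orderings on them. Combining this count with the inequality just established finishes the proof. There is no genuine obstacle here, as the result is an immediate corollary; the single point that requires care is the role of the hypothesis $k+1 \not\equiv 0 \bmod r$ in guaranteeing that the relevant cycles of $K_{k+1}$ avoid length $k+1$ and hence remain in the domain of $f$.
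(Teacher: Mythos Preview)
Your proposal is correct and matches the paper's approach, which simply declares the corollary an ``immediate consequence'' of Theorem~\ref{thm:injectivity} without writing out any details. The bookkeeping you supply---that the hypothesis $k+1 \not\equiv 0 \bmod r$ is precisely what forces the relevant cycles of $K_{k+1}$ into the domain $\mathcal{C}_{\le k}(K_{k+1})$ of $f$, and that the divisibility conclusion of Theorem~\ref{thm:injectivity} passes to divisibility by $r$---is exactly what the reader is meant to fill in.
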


\section{Proof of Theorem~\ref{thm:1modkbound}}
\label{sec:1modk}

Our aim now is to prove Theorem~\ref{thm:1modkbound}: let $k \ge 3$ and let $G$ be a $(k+1)$-critical graph.  Let $v$ be an arbitrary vertex, and let $\varphi$ be a $k$-coloring of $G-v$ using colors $1,2,...,k$. We first show that $G$ contains at least $k!/2$ cycles of length $1 \bmod k$ that include $v$.

For each $i \in [k]$, let $N_i$ be the set of neighbors of $v$ with color $i$ under $\varphi$, noting $N_i \ne \varnothing$ since $G$ is not $k$-colorable. Let $G_i$ be the subgraph of $G$ formed by deleting the edges between $v$ and $N_i$, and let $\varphi^i$ denote the extension of $\varphi$ to a proper $k$-coloring of $G_i$ by setting $\varphi^i(v)=i$.

Let $\sigma$ be a cyclic permutation of $[k]$, and consider the $\sigma$-subdigraph $D^i_{\sigma}$ of $G_i$ under $\varphi^i$. We claim that $D^i_{\sigma}$ must contain a directed path from $v$ into $N_i$.  Otherwise, letting $W$ denote the set of vertices in $D^i_{\sigma}$ accessible from $v$, we can recolor $G_i$ by shifting the colors in $W$ according to $\sigma$, producing a proper $k$-coloring $\varphi'$ of $G_i$ in which all vertices in $N_i$ have color $i$ and $v$ has color $\sigma(i)$. But in that case $\varphi'$ is a proper coloring of $G$, a contradiction.

With foresight we let $v_i \in N_i$ be a vertex on a shortest directed path $P$ in $D_{\sigma}^i$ from $v$ to $N_i$, noting $P$ has length $0 \bmod k$ since it begins and ends on a vertex of color $i$. Then $P$ along with the edge $v_iv$ forms a directed cycle $\widehat{C}_{\sigma}^i$ in $G$ of length $1 \bmod k$; let $C^i_{\sigma}$ be its underlying cycle in $G$.  Since $P$ visits all the colors of $\sigma$ in order and contains the edge $v_iv$, $\widehat{C}_{\sigma}^i$, viewed now as an oriented subgraph of $G$, determines $i$ and $\sigma$. Consequently, $C_{\sigma}^i$ could also be the underlying cycle of $\widehat{C}_{\sigma^{-1}}^{\sigma(i)}$, but of no other such directed cycle constructed this way. So as $i$ and $\sigma$ vary, this construction produces at least $k\cdot (k-1)!/2 = k!/2$ distinct cycles of length $1 \bmod k$ containing $v$.

Suppose now that $G$ has exactly $k!/2$ cycles of length $1 \bmod k$ in total. Our argument shows every vertex lies on at least $k!/2$ such cycles, so every such cycle is spanning. Fixing a cyclic permutation $\sigma$ of $[k]$ and an $i \in [k]$, the constructed cycle $C^i_{\sigma}$ is spanning, and therefore so is the shortest directed path from $v$ to $N_i$ in $D_{\sigma}^i$. Consequently, $|N_i|=1$, implying that $v$ has degree $k$ and thus $G$ is $k$-regular.  Brooks' Theorem then implies $G=K_{k+1}$, completing the proof. \\

\section{Proof of Theorem~\ref{thm:mainresult}}
\label{sec:thm1proof}

This section is focused on providing the proof of Theorem~\ref{thm:mainresult}.  Our argument consists of the following three claims, noting that any 4-critical graph has minimum degree at least 3.
\begin{enumerate}
    \item A 4-critical graph $G$ with $\delta(G) =3$ has at least 4 cycles of length $0\bmod 3$.
    \item $K_4$ is the only 4-critical graph with $\delta(G)=3$ and exactly 4 cycles of length $0\bmod 3$.
    \item A 4-critical graph $G$ with $\delta(G)\ge 4$ has at least 5 cycles of length $0\bmod 3$.
\end{enumerate}

The first claim follows from Corollary~\ref{cor:0modkwithmindeg}. The second claim is the subject of Lemma~\ref{lem:kay4}  below.  The third claim, which is the most technical argument of this paper, follows from Lemmas~\ref{lem:4gooda} and \ref{lem:4goodb} in Section~\ref{sec:MinDeg4} below.

\begin{lem}
\label{lem:kay4}
    $K_4$ is the only 4-critical graph with minimum degree 3 that has exactly 4 cycles of length $0 \bmod 3$.
\end{lem}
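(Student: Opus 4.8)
The plan is to leverage the injection $f:\mathcal{C}_{\le 3}(K_4)\to\mathcal{C}(G)$ supplied by Theorem~\ref{thm:injectivity}. Fix a vertex $v$ of degree $3$ with neighbours $v_1,v_2,v_3$ and a proper $3$-colouring $\varphi$ of $G-v$ with $\varphi(v_i)=i$, exactly as in that proof. Since $\mathcal{C}_{\le 3}(K_4)$ is the set of four triangles of $K_4$ and each image $f(C)$ has length $\equiv 0\bmod 3$ (as $|V(C)|=3$), the hypothesis that $G$ has exactly four cycles of length $0\bmod 3$ forces $f$ to be a bijection onto the set of \emph{all} such cycles. Recalling from the construction that $v\in V(f(C))$ precisely when $4\in V(C)$, the triangle $\{1,2,3\}$ yields the unique $0\bmod 3$ cycle $C_0$ avoiding $v$, while the three triangles through vertex $4$ yield three $0\bmod 3$ cycles through $v$; a direct inspection of the construction shows that the triangle $\{a,b,4\}$ produces a cycle using the edge pair $\{vv_a,vv_b\}$, so these three cycles realize the three distinct pairs of edges at $v$, one each. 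Thus exactly three cycles of length $0\bmod 3$ pass through $v$, one per pair of edges at $v$, and $C_0$ is the only one avoiding $v$.

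First I would record the reduction that makes this setup decisive. If each of the three cycles through $v$ is a triangle, then $v_1v_2,\,v_1v_3,\,v_2v_3\in E(G)$, so $G[\{v,v_1,v_2,v_3\}]\cong K_4$. As $K_4$ is $4$-chromatic and $G$ is $4$-critical, $K_4$ cannot be a proper subgraph of $G$, forcing $V(G)=\{v,v_1,v_2,v_3\}$ and hence $G=K_4$. The entire lemma therefore reduces to showing that none of the three cycles through $v$ has length $\ge 6$.

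The hard part, which I expect to be the main obstacle, is exactly this last step. I would argue by contradiction: supposing, say, that the cycle $C_{12}$ on the pair $\{vv_1,vv_2\}$ has length $\ge 6$, I would exhibit a fifth cycle of length $0\bmod 3$, contradicting the exact count. Concretely, I would run the construction with the directed paths chosen shortest in the relevant $\sigma'$-subdigraph, so that (colouring $v$ with $3$ and taking $\sigma'=(1\,2\,3)$) $C_{12}$ arises from the closed walk $v\to v_1\rightsquigarrow v_3\rightsquigarrow v_2\to v$, and I would track how this walk meets $v_1,v_2,v_3$ and the $v$-avoiding cycle $C_0$, which itself comes from a closed walk on $v_1,v_2,v_3$. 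Using the edges $vv_1,vv_2,vv_3$ together with sub-arcs of these shortest paths and of $C_0$, the aim is to splice out an additional $0\bmod 3$ cycle, distinct from $C_0,C_{12},C_{13},C_{23}$, which is either a second cycle through $v$ on the pair $\{vv_1,vv_2\}$ or a new cycle avoiding $v$.

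The delicate points I expect to fight with are threefold: controlling how the three defining colourings (which assign $v$ the colours $1$, $2$, $3$ respectively) interact, since the three cycles through $v$ need not a priori share paths; ensuring the spliced cycle has length $\equiv 0\bmod 3$ rather than $\equiv 1\bmod 3$, which is where the shortest-path choice and the cyclic colour pattern along the paths must be used carefully; and verifying the new cycle is genuinely distinct from the four already found. I would organize the casework according to whether $C_{12}$ passes through $v_3$ and whether it meets $C_0$, expecting the ``spare'' edge $vv_3$ to be the lever that converts a long $\{vv_1,vv_2\}$-cycle into the required extra cycle.
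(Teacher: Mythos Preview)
Your setup is fine and matches the paper's: the injection from Theorem~\ref{thm:injectivity} yields exactly one good cycle $C_0$ avoiding $v$ and three good cycles through $v$, one for each edge-pair at $v$. Your reduction ``if all three are triangles then $G=K_4$'' is also correct. The genuine gap is that you never prove the three cycles through $v$ are triangles; you only outline a splicing plan you yourself flag as delicate, with casework on whether $C_{12}$ meets $v_3$ or $C_0$, interaction between three different extensions $\varphi^1,\varphi^2,\varphi^3$, and modular bookkeeping you do not carry out. As written this is a proof strategy, not a proof.

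More to the point, you are attacking the wrong cycle. The paper leaves the three cycles through $v$ alone and instead analyzes the unique $v$-avoiding good cycle $C_0$. Two short observations finish the job. First, $C_0$ spans $G-v$: if some $x\in V(G-v)$ were off $C_0$, then applying Theorem~\ref{thm:my-cycle} with $e$ any edge at $x$ produces a good cycle through $x$ in $G-e$, which is a \emph{second} good cycle avoiding $v$, contradicting that $C_0$ is the only one. Second, if $C_0$ is not a triangle then it has a chord: since $\delta(G)\ge 3$ and $v$ has only three neighbours, some vertex of $C_0$ has a third neighbour inside $G-v$, and since $C_0$ spans $G-v$ that neighbour lies on $C_0$. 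Under $\varphi$ the vertices of $C_0$ cycle through the colours $1,2,3$ in order, so any chord joins two different colours and therefore cuts $C_0$ into two cycles, one of which has length $0\bmod 3$. That cycle avoids $v$ and differs from $C_0$, giving a fifth good cycle --- contradiction. Hence $C_0$ is a triangle, $|V(G)|=4$, and $G=K_4$. This bypasses entirely the interaction between the three colourings $\varphi^i$ and the splicing you were bracing for.
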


\begin{proof}
    Suppose $G$ is 4-critical, $\delta(G)=3$, and $G$ has exactly 4 cycles of length $0 \bmod 3$. Let $v$ be a vertex in $G$ with degree 3 and label its neighbors $v_1,v_2,v_3$.  Let $\varphi$ be a proper $3$-coloring of $G-v$ satisfying $\varphi(v_i)=i$, and let $\sigma=(1\,2\,3)$. Finally, we let $R^1,R^2,R^3$ denote the three directed paths in the $\sigma$-subdigraph of $G-v$ guaranteed to exist by Lemma~\ref{lem:technical}.

    For $i \in [3]$, let $\varphi^i$ be the extension of $\varphi$ to $G-vv_i$ formed by setting $\varphi^i(v)=i$, and let $D_{\sigma}^i$ be the $\sigma$-subdigraph of $G-vv_i$ under $\varphi^i$.  Then the closed walk in $D_{\sigma}^i$ formed by the concatenation $(v,R^{\sigma(i)},R^i,v)$ includes the (oriented) edges of a cycle $C^i$ in $G$ containing edges $vv_i$ and $vv_{\sigma^{-1}(i)}$ in $G$ of length $0\bmod 3$, and the closed walk formed by the concatenation $(R_1,R_3,R_2)$ includes the edges of a cycle $C^4$ in $G-v$ of length $0\bmod 3$.  (The closed walks are also illustrated in Figure~\ref{fig:fruit1}.)

    As the cycles $C^1,C^2,C^3,C^4$ are distinct, these are the only cycles of length $0\bmod 3$ in $G$. It follows then that the closed walk $(R_1,R_3,R_2)$ includes only the edges of $C^4$, and, by construction, $C^4$ contains the paths $C^i-v$, $i \in [3]$.  We further claim that $C^4$ spans $G-v$, as any vertex $x$ in $G-v$ not lying on $C^4$ must lie on a (fifth) cycle of length $0\bmod 3$ by Theorem~\ref{thm:my-cycle}, taking $e$ to be any edge incident to $x$, a contradiction.

    If $C^4$ is a triangle then it follows that $G=K_4$, so we suppose otherwise.  Since $\sigma=(1\,2\,3)$, we can label $C^4$'s vertices $x_1,x_2,x_3,\ldots,x_{3k}$ so that $E(C^4)=\{x_1x_2,x_2x_3,\ldots,x_{3k-1}x_{3k},x_{3k}x_1\}$ and
    \[\varphi(x_i) = \begin{cases}
    1 & \mbox{ if } i \equiv 1\bmod 3,\\
    2 & \mbox{ if } i \equiv 2\bmod 3,\\
    3 & \mbox{ if } i \equiv 0\bmod 3.
    \end{cases}\]
    Since $G$ has minimum degree 3 and $v$ has only three neighbors, $C^4$ contains chords. But chords can only connect vertices of different colors, and any such chord cuts $C^4$ into two cycles, one of which has length $0 \bmod 3$, a contradiction that completes the proof. (See Figure~\ref{fig:fruit2}.)

    \begin{figure}
        \centering
        \begin{tikzpicture}[scale=0.45]

            \node (v1) at (4,0) {};
            \node (v2) at (3.06,2.57) {};
            \node (v3) at (0.69,3.94) {};
            \node (v13) at (.19,4.94) {};
            \node (v4) at (-2,3.46) {};
            \node (v5) at (-3.76,1.37) {};
            \node (v6) at (-3.76,-1.37) {};
            \node (v7) at (-2,-3.46) {};
            \node (v8) at (0.69,-3.94) {};
            \node (v18) at (.29,-5.13) {};
            \node (v9) at (3.06,-2.57) {};
            \draw [-{Latex[length=3mm, width=2mm]}, thick] (v1) edge (v2);
            \draw [-{Latex[length=3mm, width=2mm]}, thick] (v2) edge (v3);
            \draw [-{Latex[length=3mm, width=2mm]}, thick] (v3) edge (v4);
            \draw [-{Latex[length=3mm, width=2mm]}, thick] (v4) edge (v5);
            \draw [-{Latex[length=3mm, width=2mm]}, thick] (v5) edge (v6);
            \draw [-{Latex[length=3mm, width=2mm]}, thick] (v6) edge (v7);
            \draw [-{Latex[length=3mm, width=2mm]}, thick] (v7) edge (v8);
            \draw [-{Latex[length=3mm, width=2mm]}, thick] (v8) edge (v9);
            \draw [-{Latex[length=3mm, width=2mm]}, thick] (v9) edge (v1);
            \draw [-{Latex[length=3mm, width=2mm]},dashed,thick] (v8) edge (v3);
            \draw [red,thick] (-1.2,0) ellipse (3.2 and 6);
            \node [circle,fill=red,scale=0.8,label=right:$x_1$] at (4,0) {};
            \node [circle,fill=blue,scale=0.8,label=above:$x_2$] at (3.06,2.57) {};
            \node [circle,fill=green!80!black,scale=0.8,label=above:$x_3$] at (0.69,3.94) {};
            \node [circle,fill=red,scale=0.8,label=above:$x_4$] at (-2,3.46) {};
            \node [circle,fill=blue,scale=0.8,label=above:$x_5$] at (-3.76,1.37) {};
            \node [circle,fill=green!80!black,scale=0.8,label=left:$x_6$] at (-3.76,-1.37) {};
            \node [circle,fill=red,scale=0.8,label=above:$x_7$] at (-2,-3.46) {};
            \node [circle,fill=blue,scale=0.8,label=above:$x_8$] at (0.69,-3.94) {};
            \node [circle,fill=green!80!black,scale=0.8,label=right:$x_9$] at (3.06,-2.57) {};

        \end{tikzpicture}
        \caption{In the proof of Lemma~\ref{lem:kay4}, an illustration of the cycle $C^4$ containing a chord, which necessarily produces a fifth cycle of length $0 \bmod 3$.}
        \label{fig:fruit2}
    \end{figure}
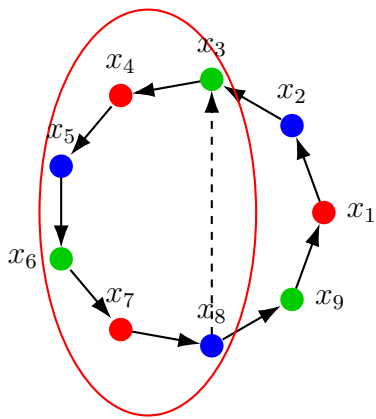
\end{proof}

\subsection{4-critical graphs with minimum degree at least 4}
\label{sec:MinDeg4}

To finish the proof of Theorem~\ref{thm:mainresult}, we must now show that every 4-critical graph $G$ with minimum degree at least 4 contains at least 5 cycles of length $0\bmod 3$. In this case, the argument used in Theorem~\ref{thm:injectivity} does not directly adapt as it obtained cycles from closed directed walks. A key to constructing these walks was that every neighbor of a vertex $v$ of degree 3 in a 4-critical graph receives a different color under a proper 3-coloring of $G-v$, which does not apply once $\delta(G) \ge 4$.

Instead, our arguments will rely on two other results.  The first is a sufficient condition for cycles of length $0 \bmod 3$ to exist due to Chen and Saito \cite{ChenSaito}:

\begin{theorem}[Chen and Saito \cite{ChenSaito}, Theorem 1]
    \label{thm:ChenSaito}
    If $G$ is a graph with $n \ge 2$ vertices and at most one vertex of degree 2 or less, then $G$ contains a cycle of length $0 \bmod 3$.
\end{theorem}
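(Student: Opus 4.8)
The plan is to prove the cited Chen--Saito theorem by strong induction on $|V(G)|$, reducing to a minimum-degree-$3$ core and then exploiting the back-edge structure of a depth-first search tree.

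First I would dispose of the routine reductions. We may assume $G$ is connected: at most one component can contain the single permitted vertex of degree at most $2$, so every other component has minimum degree at least $3$ and already satisfies the hypothesis (giving a cycle of length $0\bmod 3$ there by induction), while a trivial or degree-$\le 1$ piece may simply be deleted. Let $z$ be the unique vertex of degree at most $2$, if one exists. If $\deg(z)\le 1$, then $z$ lies on no cycle, so we delete it and recurse (iterating if necessary, as this can create at most one new low-degree vertex). If $\deg(z)=2$ with its two neighbors adjacent, the resulting triangle is a cycle of length $0\bmod 3$ and we are done. This leaves the core case: $G$ is connected and every vertex has degree at least $3$, except possibly one vertex $z$ of degree $2$ with non-adjacent neighbors.

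Next I would root a DFS tree $T$ at $z$ (or at an arbitrary vertex if no exceptional vertex exists), so that every non-root vertex has degree at least $3$. In an undirected DFS every non-tree edge is a back edge joining a vertex to a proper ancestor; writing $d(v)$ for the depth of $v$, the fundamental cycle of a back edge from $v$ to an ancestor $w$ has length $d(v)-d(w)+1$, and two back edges from $v$ to ancestors $w_1,w_2$, together with the tree path between them, produce a theta-subgraph whose three cycle lengths are determined modulo $3$ by the relevant depths. Since each non-root vertex has at least two non-tree incidences, such configurations are plentiful. A short check shows a single theta-subgraph with path lengths $p,q,s$ avoids a cycle of length $0\bmod 3$ only when the residues $\{p,q,s\}\bmod 3$ form one of the patterns $\{1,1,1\},\{2,2,2\},\{0,1,1\},\{0,2,2\}$; the aim is to show these local constraints cannot hold everywhere at once.

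The main obstacle, and the heart of the matter, is precisely this global mod-$3$ bookkeeping: no single vertex's back edges force a $0\bmod 3$ cycle, so one must propagate the residue constraints above along the tree and combine the structures at distinct vertices. I would carry this out by choosing an extremal vertex---for instance a deepest vertex $v$ whose subtree sends back edges spanning the widest depth range---and argue that the depth residues of the ancestors reachable from $v$, as dictated by the avoidance patterns, are incompatible with the degree-$\ge 3$ condition holding simultaneously at $v$ and along the tree paths joining the endpoints of its back edges. The delicate point is that back edges emanating from different vertices interact through shared tree paths, so the cycles one assembles need not be edge-disjoint; I expect this interaction analysis, rather than any isolated computation, to be where the real difficulty lies.
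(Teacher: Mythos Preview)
The paper does not prove this statement; Theorem~\ref{thm:ChenSaito} is quoted as an external result from Chen and Saito's 1994 paper and used as a black box, so there is no ``paper's own proof'' to compare your proposal against.

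On the merits of the plan itself: the reductions and the DFS set-up are standard and fine, but there is one slip and one genuine gap. The slip is the assertion that ``each non-root vertex has at least two non-tree incidences.'' This is false in general: a non-root vertex of degree~$3$ may have two children in the DFS tree and hence no back edges at all. The claim holds only for \emph{leaves} of the DFS tree. Since you later restrict attention to a deepest vertex, which is necessarily a leaf, this error is ultimately harmless---but it should be corrected.

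The genuine gap is the final paragraph. You correctly compute that a single theta-subgraph can avoid all $0\bmod 3$ cycles (your list of avoidance patterns is right), and you correctly diagnose that the real content of the theorem is the global propagation of these residue constraints along the tree. But you do not actually perform that propagation: ``argue that the depth residues \dots\ are incompatible with the degree-$\ge 3$ condition'' is a statement of intent, not an argument. In Chen and Saito's original proof this step is a nontrivial case analysis that combines the back-edge structure at several vertices, and it is not at all automatic that the local avoidance patterns are globally inconsistent. Your proposal stops exactly where the proof begins; as written it is an outline of the difficulty rather than a resolution of it.
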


We remark that in \cite{ChenSaito}, the authors refer to cycles with length divisible by $3$ as {\bf good cycles}, and we will do so for the remainder of this section. Our first application of this will be to handle the case where a vertex lies on at least three good cycles.

\begin{lem}
\label{lem:4gooda}
    If $G$ is a 4-critical graph with $\delta(G)\ge 4$ and there exists a vertex $v \in V(G)$ such that $v$ lies on at least 3 good cycles, then $G$ contains at least 5 good cycles.
\end{lem}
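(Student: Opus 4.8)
The plan is to produce two good cycles that avoid $v$: since $v$ already lies on at least three good cycles, and any cycle avoiding $v$ is distinct from one containing $v$, this yields the required five. Write $H = G - v$. Because $\delta(G) \ge 4$ and deleting $v$ removes at most one edge at each remaining vertex, we have $\delta(H) \ge 3$, so $H$ has no vertex of degree at most $2$; as $|V(H)| = |V(G)| - 1 \ge 4$, Theorem~\ref{thm:ChenSaito} supplies a good cycle $D_1$ in $H$, our fourth good cycle. It remains to find a second good cycle in $H$ distinct from $D_1$.

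\textbf{Main case.} To obtain $D_2 \ne D_1$ I would delete an edge of $D_1$ and reapply Chen--Saito, since a good cycle in $H - e$ avoids $e$ and hence differs from $D_1$. The only obstruction is that $H - e$ might have two vertices of degree at most $2$. For an edge $e = ab$ of $H$, the only vertices whose degree can drop to $2$ in $H - e$ are $a$ and $b$, and only when their $H$-degree is exactly $3$; equivalently, when they lie in $N(v)$ and have $G$-degree exactly $4$. Thus, if $D_1$ contains an edge $ab$ with $\deg_H(a) \ge 4$ -- that is, with $a \notin N(v)$, or $a \in N(v)$ but $\deg_G(a) \ge 5$ -- then $H - ab$ has at most one vertex of degree at most $2$, and Theorem~\ref{thm:ChenSaito} produces a good cycle $D_2$ in $H - ab$. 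Since $D_2$ avoids $ab \in E(D_1)$, we get $D_2 \ne D_1$, completing this case.

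\textbf{Degenerate case and main obstacle.} The argument above fails only when every vertex of $D_1$ has $H$-degree exactly $3$, which forces $V(D_1) \subseteq N(v)$ with every vertex of $D_1$ of $G$-degree $4$; then $\{v\} \cup V(D_1)$ induces a wheel with hub $v$ and rim $D_1$, where $m := |V(D_1)| \equiv 0 \bmod 3$. When $m \ge 6$ this case is actually easy: the $m$ rim triangles $v x_i x_{i+1} v$ are themselves distinct good cycles, so the wheel already contains $m + 1 \ge 7$ good cycles. The genuine obstacle is $m = 3$, where $\{v\} \cup V(D_1)$ is merely a copy of $K_4$ contributing only four good cycles; since $\delta(G) \ge 4$, each rim vertex has a neighbor outside this $K_4$, so $G$ properly extends it. Here I would locate the fifth good cycle in the rest of $G$: apply Theorem~\ref{thm:my-cycle} to an edge joining a rim vertex to an external neighbor (using that $G$ is $4$-critical, so $G - e$ is $3$-colorable) to obtain a good cycle through that external vertex, and then argue via $4$-criticality and $2$-connectivity that it may be taken either to avoid $v$ or otherwise to be distinct from the four cycles already found.

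I expect this last step -- ruling out that every good cycle produced from the external neighbors coincides with one already counted -- to be the technical heart of the lemma, requiring a short case analysis of how the external neighbors of the $K_4$-rim attach to $G$.
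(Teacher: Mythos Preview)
Your overall strategy matches the paper's: find a good cycle $D_1$ in $H=G-v$ via Chen--Saito, then delete a suitable edge of $D_1$ and reapply Chen--Saito, with the degenerate case being $V(D_1)\subseteq N(v)$ forming a wheel. Your handling of the main case and of the wheel with $m\ge 6$ is correct and essentially identical to the paper's.

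The gap is in the $m=3$ case, which you flag as the ``technical heart'' and propose to attack via Theorem~\ref{thm:my-cycle} and a case analysis on external attachments. In fact this case is a one-line contradiction that you are overlooking: if $V(D_1)\subseteq N(v)$ and $|V(D_1)|=3$, then $\{v\}\cup V(D_1)$ induces a $K_4$ in $G$. Since $\delta(G)\ge 4$ forces $|V(G)|\ge 5$, this $K_4$ is a \emph{proper} subgraph of $G$; but $G$ is $4$-critical, so every proper subgraph is $3$-colorable, while $K_4$ is not. Hence the $m=3$ case simply cannot occur, and no further argument is needed. Your proposed workaround is unnecessary, and as stated it is also incomplete: the good cycle through the external neighbor that Theorem~\ref{thm:my-cycle} would produce could in principle coincide with one of the three hypothesized good cycles through $v$ (which need not lie inside the $K_4$), so the ``short case analysis'' you anticipate would genuinely be required---whereas the criticality observation dispatches the case instantly.
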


\begin{proof}
    Suppose $v \in V(G)$ lies on at least 3 good cycles, call them $C^1,C^2,C^3$.  Since $\delta(G) \ge 4$, $\delta(G-v) \ge 3$, so $G-v$ contains at least one good cycle $C^4$ by Theorem~\ref{thm:ChenSaito}.

   If $V(C^4) \nsubseteq N(v)$ then $C^4$ contains an edge $e$ with at most one endpoint in $N(v)$. Deleting $e$ from $G-v$ produces a subgraph with at most one vertex of degree 2, and thus it contains a good cycle $C^5\ne C^4$ by Theorem~\ref{thm:ChenSaito}, yielding the result.

   If $V(C^4) \subseteq N(v)$, then every edge of $C^4$ forms a triangle with $v$, and we're done if $|V(C^4)|\ge 5$.  But if $|V(C^4)|<5$, then $|V(C^4)|=3$ since $C^4$ is good. This implies $G$ contains a $K_4$ on $C^4 \cup \{v\}$, contradicting $G$'s 4-criticality and completing the proof.

\end{proof}

We'll also appeal to a recent result of Gao, Huo, Liu and Ma \cite{GHLM}, which was used to resolve a number of conjectures regarding the existence of cycles of prescribed lengths.

\begin{definition}[Gao, Huo, Liu and Ma \cite{GHLM}]
    A collection of $\ell$ paths is \textbf{admissible} if the length of every path is at least two, and the lengths form an arithmetic progression with common difference one or two.
\end{definition}

\begin{theorem} [Gao, Huo, Liu, and Ma \cite{GHLM}, Theorem 1.2 ]
\label{thm:admissible}
    Let $G$ be a 2-connected graph and let $x,y$ be distinct vertices of $G$. If every vertex in $G$ other than $x$ and $y$ has degree at least $\ell+1$, then there exist $\ell$ admissible paths from $x$ to $y$ in $G$.\\
\end{theorem}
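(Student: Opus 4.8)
The plan is to approach this through an extremal argument on a depth-first-search tree, since the statement asks us to control two things at once: the number of $x$--$y$ paths and the fine arithmetic structure of their lengths. A convenient way to isolate the second requirement is to introduce an auxiliary vertex $z$ adjacent to both $x$ and $y$ and pass to $G' = G + z$. Because $z$ has degree two, every cycle of $G'$ through $z$ is forced to traverse the edges $zx$ and $zy$, so such cycles correspond bijectively to $x$--$y$ paths of $G$ with length exactly two smaller. One checks easily that $G'$ is again $2$-connected and that every vertex other than $x$, $y$, $z$ retains degree at least $\ell + 1$. Under this correspondence, finding $\ell$ admissible $x$--$y$ paths is the same as finding $\ell$ cycles through $z$ whose lengths form an arithmetic progression with common difference $1$ or $2$, and the admissibility constraint (lengths at least two) becomes the harmless constraint that these cycles have length at least four.

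For the core statement I would induct on $\ell$. The base case $\ell = 1$ is immediate: $2$-connectivity supplies two internally disjoint $x$--$y$ paths, at least one of which has length at least two. For the inductive step I would exploit the extra unit of degree: fixing an extremal path (a longest $x$--$y$ path realizing the current progression) together with a DFS tree in which every non-tree edge is a back edge to an ancestor, I would argue that a high-degree internal vertex must admit a \emph{short} local detour --- rerouting through one of its many neighbors, or swapping a single tree edge for a back edge --- that alters the path length by exactly $1$ or $2$ while keeping the endpoints at $x$ and $y$. Iterating this should produce new paths whose lengths extend the progression by a single step at a time.

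The heart of the difficulty --- and where I expect to spend essentially all of the work --- is not exhibiting $\ell$ paths of \emph{distinct} lengths, which already follows from counting the at least $\ell + 1$ back edges incident to a deepest leaf of the DFS tree, but rather ruling out a gap of size three or more between consecutive achievable lengths, so that the lengths genuinely form an arithmetic progression of common difference $1$ or $2$ rather than an arbitrary spread. This gap control is precisely what the degree hypothesis (internal vertices of degree at least $\ell + 1$) is for: one must show that whenever two available path lengths differ by at least three, the degree surplus at the vertex anchoring the longer detour forces an intermediate rerouting, contradicting the extremal choice of path. Handling this uniformly, while simultaneously ensuring every detour can be anchored so that the path continues to run between $x$ and $y$ (equivalently, so that the corresponding cycle still passes through $z$), is the technical crux; I would expect the careful bookkeeping here, rather than any single clever idea, to be the main obstacle.
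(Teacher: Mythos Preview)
This theorem is not proved in the paper at all: it is quoted from Gao, Huo, Liu, and Ma \cite{GHLM} and used as a black box in the proof of Lemma~\ref{lem:exactlyone}. There is therefore no ``paper's own proof'' to compare your proposal against.

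Evaluating the proposal on its own merits: it is a plan, not a proof, and the plan does not close the gap it correctly identifies. The reduction via the auxiliary vertex $z$ is fine but purely cosmetic --- it trades $x$--$y$ paths for cycles through $z$ without making the arithmetic any easier. The substantive claim is that ``a high-degree internal vertex must admit a short local detour \ldots\ that alters the path length by exactly $1$ or $2$,'' and this is asserted rather than argued. That claim \emph{is} the theorem: the whole difficulty is that a vertex of degree $\ell+1$ on a path gives you many back edges and hence many alternative path lengths, but nothing in a bare DFS/back-edge count prevents those lengths from being spaced three or more apart. Your final paragraph openly concedes this and defers the resolution to unspecified ``careful bookkeeping,'' so the inductive step as written has no content. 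The actual argument in \cite{GHLM} is considerably more delicate than a single local rerouting; if you want to pursue this line you will need a concrete structural mechanism (not just a degree count) that forces an intermediate length whenever a gap of size at least three appears, and you have not supplied one.
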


We note that Theorem~\ref{thm:my-cycle} implies that every vertex $v$ in a 4-critical graph lies on at least two good cycles, by applying the Theorem first with an arbitrary edge incident with $v$, then with an edge incident with $v$ on the found cycle.

\begin{lem}
\label{lem:exactlyone}
    If $G$ is $4$-critical with $\delta(G) \ge 4$ and every vertex is on at most two good cycles, then:
    \begin{enumerate}
        \item $G$ is $4$-regular, and
        \item Every edge of $G$ lies on exactly one good cycle.
\end{enumerate}
\end{lem}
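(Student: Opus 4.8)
The plan is to reduce both parts to a single structural fact: \emph{every edge of $G$ lies on at least one good cycle}. First I would record two preliminaries. Since $G$ is $4$-critical it is $2$-connected (a cut vertex would let us paste together $3$-colorings of the blocks), and by the remark preceding this lemma every vertex of $G$ lies on at least two good cycles; combined with the hypothesis, every vertex lies on \emph{exactly} two good cycles. These two facts, plus the structural fact, will be all I need.

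To prove the structural fact, fix an edge $e=xy$ and apply Theorem~\ref{thm:admissible} to the $2$-connected graph $G$ with the pair $x,y$ and $\ell=3$: since $\delta(G)\ge 4$, every vertex other than $x,y$ has degree at least $\ell+1=4$, so there exist three admissible $x,y$-paths. Their lengths are all at least $2$ and form an arithmetic progression with common difference $1$ or $2$, so the three lengths realize all three residues modulo $3$; in particular one such path $P$ has length $\equiv 2\bmod 3$. As $P$ has length at least $2$ it does not use the edge $e$, so $P$ together with $e$ is a cycle of length $\equiv 0\bmod 3$ containing $e$. Hence every edge lies on a good cycle.

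For part (1), let $v$ be any vertex and let $D_1,D_2$ be the two good cycles through $v$. Each edge incident to $v$ lies on some good cycle by the structural fact, and that cycle contains $v$, so it is $D_1$ or $D_2$. Since a cycle meets $v$ in exactly two incident edges, $D_1$ and $D_2$ together cover at most four edges at $v$; thus $\deg(v)\le 4$, and with $\delta(G)\ge 4$ we get $\deg(v)=4$. As $v$ was arbitrary, $G$ is $4$-regular.

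For part (2), I would double count incidences between edges and good cycles. Writing $n=|V(G)|$ and letting $C$ range over the good cycles of $G$,
\[
\sum_{e\in E(G)}\#\{\text{good cycles through }e\}=\sum_{C}|E(C)|=\sum_{C}|V(C)|=\sum_{v\in V(G)}\#\{\text{good cycles through }v\}=2n,
\]
using that every vertex lies on exactly two good cycles. Since $G$ is $4$-regular, $|E(G)|=2n$, so the left-hand sum has exactly $2n$ terms and total $2n$; as each term is at least $1$ by the structural fact, each equals $1$, so every edge lies on exactly one good cycle. The substantive step here is the structural fact, which is where Theorem~\ref{thm:admissible} does the real work; everything else is local counting (part 1) and a global double count (part 2). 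The one mild point to verify is that the chosen admissible path of length $\equiv 2\bmod 3$ genuinely avoids $e$, which holds because admissible paths have length at least two.
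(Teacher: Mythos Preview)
Your proof is correct and follows the paper's approach almost exactly: the key structural fact (every edge lies on a good cycle) is obtained in the same way via Theorem~\ref{thm:admissible}, and your argument for part~(1) matches the paper's. The only divergence is in part~(2): the paper argues locally---if an edge $uv$ were on two good cycles, those cycles cover at most three of the four edges at $v$, so the fourth edge yields a third good cycle through $v$, contradicting the hypothesis---whereas you run a clean global double count of edge--cycle incidences to force each edge's count to equal~$1$. Both arguments are elementary and short; yours has the minor aesthetic advantage of yielding the exact value directly rather than by contradiction, while the paper's stays entirely local to a single vertex.
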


\begin{proof}
It suffices to argue that every edge lies on at least one good cycle.  This will imply that every vertex lies on at least $d(v)/2$ good cycles, yielding a maximum degree of at most 4 under the assumptions given and therefore $G$ is $4$-regular.  Furthermore, if any edge $uv$ lies on at least two good cycles, then those two cycles only cover three edges incident with $v$, implying $v$ lies on a third good cycle, a contradiction.

To that end, pick any edge $e=uv$. It is routine to show that a color-critical graph is $2$-connected (e.g., see \cite{WestCM}, Exercise~8.2.11), so by Theorem~\ref{thm:admissible} there exist $3$ admissible paths from $u$ to $v$. Since the common difference is one or two, their lengths cover the congruence classes modulo 3, so one of these paths has length $2\bmod 3$ and forms a good cycle with $e$.
\end{proof}

The next lemma will complete the proof of Theorem~\ref{thm:mainresult}.

\begin{lem}
\label{lem:4goodb}
    If $G$ is a 4-critical graph with minimum degree at least 4 and every vertex is on at most two good cycles, then $G$ contains at least 5 good cycles.
\end{lem}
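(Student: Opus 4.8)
The plan is to use Lemma~\ref{lem:exactlyone} to fix the global structure and then rule out the existence of four or fewer good cycles. By Lemma~\ref{lem:exactlyone}, $G$ is $4$-regular and every edge lies on exactly one good cycle; combined with the earlier observation that every vertex of a $4$-critical graph lies on at least two good cycles, and the hypothesis that it lies on at most two, every vertex lies on \emph{exactly} two good cycles. Hence the good cycles $C_1,\dots,C_g$ partition $E(G)$, and it suffices to prove $g\ge 5$. Two observations drive the argument. First, for any vertex $v$ the good cycles of $G-v$ are precisely the $C_i$ avoiding $v$, of which there are exactly $g-2$; moreover $G-v$ has minimum degree $3$, its only degree-$3$ vertices being the four neighbors of $v$. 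Second (the key local fact), no good cycle avoiding $v$ can contain two consecutive vertices $a_1,a_2\in N(v)$: otherwise $va_1a_2$ is a good triangle sharing the edge $a_1a_2$ with that cycle, contradicting that $a_1a_2$ lies on a unique good cycle. Thus $N(v)\cap V(C_i)$ is an independent set of $C_i$ for every good cycle $C_i$ avoiding $v$.

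I would then eliminate $g\in\{2,3,4\}$ in turn. For $g=2$, both good cycles are spanning, so $G-v$ contains no good cycle, contradicting Theorem~\ref{thm:ChenSaito} since $\delta(G-v)\ge 3$. For $g=3$, let $C_3$ be the unique good cycle of $G-v$; if some edge of $C_3$ had an endpoint outside $N(v)$, deleting that edge would leave at most one vertex of degree at most $2$, so Theorem~\ref{thm:ChenSaito} would yield a second good cycle in $G-v$, a contradiction. Hence $V(C_3)\subseteq N(v)$, and since $|N(v)|=4$ and $|V(C_3)|\equiv 0\bmod 3$ forces $|V(C_3)|=3$, the cycle $C_3$ is a triangle and $\{v\}\cup V(C_3)$ induces a $K_4$. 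As $G$ is $4$-regular it is not $K_4$, so this $K_4$ is a proper subgraph, contradicting $4$-criticality.

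The case $g=4$ is the main obstacle. Here $G-v$ has exactly two good cycles $C_3,C_4$, and the goal is to produce a third. The mechanism is to choose edges $e_3\in E(C_3)$ and $e_4\in E(C_4)$ so that $G-v-e_3-e_4$ has at most one vertex of degree at most $2$; Theorem~\ref{thm:ChenSaito} then supplies a good cycle avoiding both $e_3$ and $e_4$, hence distinct from $C_3$ and $C_4$, giving a fifth good cycle and the desired contradiction. Writing $t_i=|N(v)\cap V(C_i)|$, each neighbor of $v$ contributes to at most one of $t_3,t_4$, so $t_3+t_4\le 4$. The only obstruction to choosing an edge of $C_i$ with both endpoints outside $N(v)$ is that $N(v)\cap V(C_i)$ be a vertex cover of $C_i$; by the independence fact this cover must be an independent vertex cover, which exists only for an even cycle realized by an alternating set, forcing $C_i$ to have even length divisible by $3$ and thus $t_i=\tfrac12|V(C_i)|\ge 3$. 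Since $t_3+t_4\le 4$, at most one of $C_3,C_4$ can be so constrained. For the unconstrained cycle I take an edge outside $N(v)$ (no low-degree vertex created); for the other I may take an edge with a single endpoint in $N(v)$ (creating exactly one degree-$2$ vertex). Choosing $e_3,e_4$ vertex-disjoint is routine here because the constrained cycle has length at least $6$ and short cycles (in particular triangles, which by independence satisfy $t_i\le 1$) leave ample room; any residual tight configuration collapses via the triangle argument of the previous paragraph.

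The step I expect to require the most care is this final $g=4$ analysis: unlike $g=2,3$, deleting a single edge only rediscovers the other surviving good cycle, so one must delete from both $C_3$ and $C_4$ simultaneously while controlling degrees, and it is exactly here that the local independence fact (ruling out the troublesome triangle and triangle--triangle configurations) does the essential work. Having excluded $g\le 4$ and knowing $g\ge 2$, we conclude $g\ge 5$, completing the proof.
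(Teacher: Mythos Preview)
Your approach is correct in outline and genuinely different from the paper's. The paper never counts the total number $g$ of good cycles; instead it picks a non-spanning good cycle $C^1$, an edge $uv$ leaving it, and splits on whether the good cycle $C^2$ through $uv$ is a triangle. It then deletes \emph{two or three vertices} ($u,v$, or $u,v,w$), applies Theorem~\ref{thm:ChenSaito} there to get a fourth good cycle $C$, and handles the exceptional case $V(C)\subseteq N(\{u,v\})$ (resp.\ $N(\{u,v,w\})$) by exhibiting an explicit sixth-length cycle $C'$ sharing an edge with $C$, contradicting Lemma~\ref{lem:exactlyone}. Your route---observing that the good cycles partition $E(G)$, so $g\ge 5$ is equivalent to ruling out $g\le 4$, and then working in $G-v$ with edge deletions---is cleaner conceptually and makes systematic use of the ``independence fact'' (no good cycle avoiding $v$ has two consecutive vertices in $N(v)$), which the paper only uses implicitly. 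Your $g=2$ and $g=3$ cases are complete; in fact for $g=3$ the independence fact alone already kills $V(C_3)\subseteq N(v)$, so the $K_4$ argument is not even needed.

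The $g=4$ paragraph, however, is a sketch with two imprecisions you should fix. First, ``vertex-disjoint'' is not the right target: what you actually need is that $G-v-e_3-e_4$ has at most one vertex of degree $\le 2$, and this holds whenever (i) both $e_i$ have no endpoint in $N(v)$ (then the only possible low-degree vertex is a shared non-$N(v)$ endpoint, of which there is at most one), or (ii) $e_3$ avoids $N(v)$ while $e_4$ has exactly one endpoint $a\in N(v)$ \emph{and} $e_3$ avoids the other endpoint $b$ of $e_4$. Second, the ``triangle argument'' fallback is a red herring here; no residual tight configuration actually arises. The honest completion is: since a good $C_i$ with $N(v)\cap V(C_i)$ an independent vertex cover has even length divisible by $3$, one gets $t_i\in\{3,6,\dots\}$, and $t_3+t_4\le 4$ forces at most one constrained cycle with $t_i=3$ and $|V(C_i)|=6$. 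If neither is constrained, pick any $e_3,e_4$ with endpoints outside $N(v)$ and use (i). If (say) $C_4$ is constrained, then $t_3\le 1$ so some $e_3=\{p,q\}\subseteq V(C_3)\setminus N(v)$ exists; since $C_4$ has three non-$N(v)$ vertices, at least one is outside $\{p,q\}$, and any edge of $C_4$ at that vertex works for $e_4$ via (ii). With these two sentences in place your $g=4$ case is complete, and the overall argument stands as a valid alternative to the paper's proof.
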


\begin{proof}
    We first observe that under the given assumptions and by Theorem~\ref{thm:my-cycle}, every vertex lies on exactly two good cycles. Furthermore, deleting any vertex yields a subgraph with minimum degree at least 3, which contains a good cycle by Theorem~\ref{thm:ChenSaito}. Thus, $G$ has at least three good cycles, at least one of which is not spanning.

    Let $C^1$ be a non-spanning good cycle, and let $uv$ be an edge connecting $C^1$ to the rest of $G$, where $u \in V(C^1)$ and $v \in V(G)-V(C^1)$.  Then, by Lemma~\ref{lem:exactlyone}, the edge $uv$ lies on a good cycle $C^2$, and then $v$ must lie on a second good cycle $C^3$.  Our argument proceeds by considering two cases: whether or not $C^2$ is a triangle.\\

    \noindent {\bf Case 1: $C^2$ is not a triangle.} By Lemma~\ref{lem:exactlyone}, $C^2$ is the only good cycle containing edge $uv$, and $G$ is $4$-regular. Consequently, $N_G(u) \cap N_G(v) = \emptyset$, so let $U = N_G(u)-v$ and $V = N_G(v)-u$, and label their vertices $u_1, u_2, u_3$ and $v_1, v_2, v_3$, respectively. Observe that if $H = G - u - v$, then for any vertex $w \in U \cup V$, $d_H(w) = 3$, while for any vertex $ x \in (V(H) - U \cup V)$, $d_H(x) = 4$. Thus, by Theorem~\ref{thm:ChenSaito}, there exists a fourth good cycle that does not include $u$ nor $v$; label it $C$. We have 2 subcases for $C$:

    \begin{enumerate}
        \item $C$ has an edge $e$ with an endpoint not in $U\cup V$
        \item $V(C) \subseteq U \cup V$.
    \end{enumerate}

    In Subcase 1, the edge $e$ has at most one endpoint of degree 3. So, its removal will result in a graph with at most one vertex of degree 2. Therefore, by Theorem~\ref{thm:ChenSaito}, we can see that there must exist another good cycle, call it $C'$, that does not contain $u$, $v$, nor $e$. Thus, $C'$ is our fifth good cycle in $G$.

    For Subcase 2, note that no edge in $C$ is contained in $U$ or in $V$, else it lies on a triangle, contradicting the fact that every edge is on exactly one good cycle. Therefore, $C$ is bipartite, and because $C$ is good, it must be a 6-cycle that spans $U \cup V$.

    Without loss of generality, we may assume $C = [u_1,v_1,u_2,v_2,u_3,v_3]$. Now, let $C'=[u,u_1,v_1,u_2,v_2,v]$. $C$ and $C'$ are two different good cycles that share common edges; a contradiction. (See Figure~\ref{fig:fruit8}.) Thus, in Case 1, $G$ has at least 5 good cycles.

    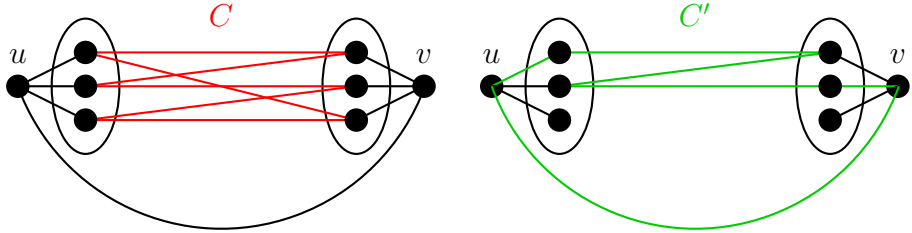
\begin{figure}[H]
        \centering
        \begin{tikzpicture}[scale=.45]
            \draw [thick] (-4,0) ellipse (1 and 2);
            \draw [thick] (4,0) ellipse (1 and 2);
            \node [circle, fill=black, scale=.8, label=above:$u$] at (-6,0) {};
            \node [circle, fill=black, scale=.8, label=above:$v$] at (6,0) {};
            \draw [thick] (-6,0) edge (-4,1);
            \draw [thick] (-6,0) edge (-4,0);
            \draw [thick] (-6,0) edge (-4,-1);
            \draw [thick] (6,0) edge (4,1);
            \draw [thick] (6,0) edge (4,0);
            \draw [thick] (6,0) edge (4,-1);
            \draw [thick,red] (-4,1) edge (4,1);
            \draw [thick,red] (4,1) edge (-4,0);
            \draw [thick,red] (-4,0) edge (4,0);
            \draw [thick,red] (4,0) edge (-4,-1);
            \draw [thick,red] (-4,-1) edge (4,-1);
            \draw [thick,red] (4,-1) edge (-4,1);
            \draw [thick] (-6,0) arc (200:340:6.4);
            \node [circle, fill=black, scale=.8] at (-4,1) {};
            \node [circle, fill=black, scale=.8] at (-4,0) {};
            \node [circle, fill=black, scale=.8] at (-4,-1) {};
            \node [circle, fill=black, scale=.8] at (4,1) {};
            \node [circle, fill=black, scale=.8] at (4,0) {};
            \node [circle, fill=black, scale=.8] at (4,-1) {};

            \draw [thick] (10,0) ellipse (1 and 2);
            \draw [thick] (18,0) ellipse (1 and 2);
            \node [circle, fill=black, scale=.8, label=above:$u$] at (8,0) {};
            \node [circle, fill=black, scale=.8, label=above:$v$] at (20,0) {};
            \draw [thick,green!80!black] (8,0) edge (10,1);
            \draw [thick] (8,0) edge (10,0);
            \draw [thick] (8,0) edge (10,-1);
            \draw [thick] (20,0) edge (18,1);
            \draw [thick,green!80!black] (20,0) edge (18,0);
            \draw [thick] (20,0) edge (18,-1);
            \draw [thick,green!80!black] (10,1) edge (18,1);
            \draw [thick,green!80!black] (18,1) edge (10,0);
            \draw [thick,green!80!black] (10,0) edge (18,0);
            \draw [thick,green!80!black] (8,0) arc (200:340:6.4);
            \node [circle, label=above:\textcolor{red}{$C$}] at (0,1) {};
            \node [circle, label=above:\textcolor{green!80!black}{$C'$}] at (14,1) {};
            \node [circle, fill=black, scale=.8] at (10,1) {};
            \node [circle, fill=black, scale=.8] at (10,0) {};
            \node [circle, fill=black, scale=.8] at (10,-1) {};
            \node [circle, fill=black, scale=.8] at (18,1) {};
            \node [circle, fill=black, scale=.8] at (18,0) {};
            \node [circle, fill=black, scale=.8] at (18,-1) {};
        \end{tikzpicture}
        \caption{This image represents the fourth and fifth good cycles constructed in Subcase 2 of Case 1, noting this yields a contradiction as they share edges.
        }
        \label{fig:fruit8}
    \end{figure}

    \noindent {\bf Case 2: $C^2$ is a triangle.} Since $C^2$ is a triangle, there is a vertex $w$ such that $V(C^2) = \{u,v,w\}$. Let $U = N_G(u) - v - w$, $V = N_G(v) - u - w$, and $W = N_G(w) - u - v$. We note that $U,V,W$ are pairwise disjoint, else we contradict Lemma~\ref{lem:exactlyone}, and we may label their vertices $U=\{u_1,u_2\}, V = \{v_1,v_2\}$, and $W=\{w_1,w_2\}$. In the subgraph $H=G-u-v-w$, all vertices in $U$, $V$, and $W$ will have degree 3, while all other vertices in $H$ will have degree 4. So by Theorem~\ref{thm:ChenSaito}, $H$ has a good cycle $C$ which does not use vertices $u$ or $v$. Thus, $C$ is a fourth good cycle in $G$, and we consider 2 subcases:

    \begin{enumerate}
        \item $C$ has an edge $e$ that has an endpoint not in $U \cup V \cup W$,
        \item $V(C)$ is a subset of $U \cup V \cup W$.
    \end{enumerate}

    In Subcase 1, similar to our argument in Case 1, removing $e$ will result in a graph with at most one vertex with degree at most 2. Then Theorem~\ref{thm:ChenSaito} yields that $G$ contains a fifth distinct good cycle.

    So we will focus on Subcase 2. By similar reasoning as in Subcase 2 of Case 1, we know that $C$ cannot have an edge between $u_1$ and $u_2$, between $v_1$ and $v_2$, or between $w_1$ and $w_2$. So, $C$ is tripartite.  Suppose first that $C$ contains a vertex whose neighbors lie in different parts: without loss of generality, we may suppose that $C$ contains $u_1,v_1,w_1$ and $u_1$ is adjacent to $v_1,w_1$.  Then the cycle $C' =[u_1,v_1,v,u,w,w_1]$ is a different good cycle that shares edges with $C$; a contradiction. (See Figure~\ref{fig:fruit9}.)

    \begin{figure}[H]
        \centering
        \begin{tikzpicture}[scale=.25]
            \draw[thick,color=black] (-10,3.5) ellipse (4 and 1.5);
            \draw[thick,color=black,rotate=60] (-5,4.75) ellipse (4 and 1.5);
            \draw[thick,color=black,rotate=120] (5,12.5) ellipse (4 and 1.5);
            \draw[thick] (-10,8) edge (-8,3.5);
            \draw[thick] (-10,8) edge (-12,3.5);
            \draw[thick] (-17.25,-3.75) edge (-12.35,-3.5);
            \draw[thick] (-17.25,-3.75) edge (-14.5,0);
            \draw[thick] (-2.6,-3.5) edge (-7.65,-3.5);
            \draw[thick] (-2.6,-3.5) edge (-6,-.5);
            \draw[thick,red] (-8,3.5) edge (-14.5,0);
            \draw[thick,red] (-8,3.5) edge (-7.65,-3.5);
            \draw[thick,red] (-7.65,-3.5) edge (-14.5,0);
            \node[circle, fill=black,scale = 0.8] at (-8,3.5) {};
            \node[circle, fill=black,scale = 0.8] at (-12,3.5) {};
            \node[circle, fill=black,scale = 0.8] at (-12.35,-3.5) {};
            \node[circle, fill=black,scale = 0.8] at (-14.5,0) {};
            \node[circle, fill=black,scale = 0.8] at (-7.65,-3.5) {};
            \node[circle, fill=black,scale = 0.8] at (-6,-.5) {};
            \node[circle, fill=black,scale=0.8,label=above:$u$] at (-10,8) {};
            \node[circle, fill=black,scale=0.8,label=left:$v$] at (-17.25,-3.75) {};
            \node[circle, fill=black,scale=0.8,label=right:$w$] at (-2.6,-3.5) {};
            \draw[bend left,thick] (-17.25,-3.75) edge (-10,8);
            \draw[bend left,thick] (-10,8) edge (-2.6,-3.5);
            \draw[bend left,thick] (-2.6,-3.5) edge (-17.25,-3.75);

            \draw[thick,color=black] (10,3.5) ellipse (4 and 1.5);
            \draw[thick,color=black,rotate=60] (5,-12.5) ellipse (4 and 1.5);
            \draw[thick,color=black,rotate=120] (-5,-4.75) ellipse (4 and 1.5);
            \draw[thick] (10,8) edge (8,3.5);
            \draw[thick] (10,8) edge (12,3.5);
            \draw[line width = 2.5,green!80!black] (17.25,-3.75) edge (12.35,-3.5);
            \draw[thick] (17.25,-3.75) edge (14.5,0);
            \draw[thick] (2.6,-3.5) edge (7.65,-3.5);
            \draw[line width = 2.5,green!80!black] (2.6,-3.5) edge (6,-.5);
            \draw[line width = 2.5,green!80!black] (12,3.5) edge (12.35,-3.5);
            \draw[line width = 2.5,green!80!black] (12,3.5) edge (6,-.5);
            \draw[bend right,line width = 2.5,green!80!black] (17.25,-3.75) edge (10,8);
            \draw[bend right,line width = 2.5,green!80!black] (10,8) edge (2.6,-3.5);
            \draw[bend right,thick] (2.6,-3.5) edge (17.25,-3.75);
            \node[circle, fill=black,scale = 0.8] at (8,3.5) {};
            \node[circle, fill=black,scale = 0.8] at (12,3.5) {};
            \node[circle, fill=black,scale = 0.8] at (12.35,-3.5) {};
            \node[circle, fill=black,scale = 0.8] at (14.5,0) {};
            \node[circle, fill=black,scale = 0.8] at (7.65,-3.5) {};
            \node[circle, fill=black,scale = 0.8] at (6,-.5) {};
            \node[circle, fill=black,scale=0.8,label=above:$u$] at (10,8) {};
            \node[circle, fill=black,scale=0.8,label=right:$w$] at (17.25,-3.75) {};
            \node[circle, fill=black,scale=0.8,label=left:$v$] at (2.6,-3.5) {};
        \end{tikzpicture}
        \caption{In Case 2, Subcase 2, the image on the left represents $G$ with a good cycle $C$ containing a vertex with neighbors in both other parts. The image on the right illustrates a second good cycle $C'$ outlined in green, sharing  edges with $C$.}
        \label{fig:fruit9}
    \end{figure}
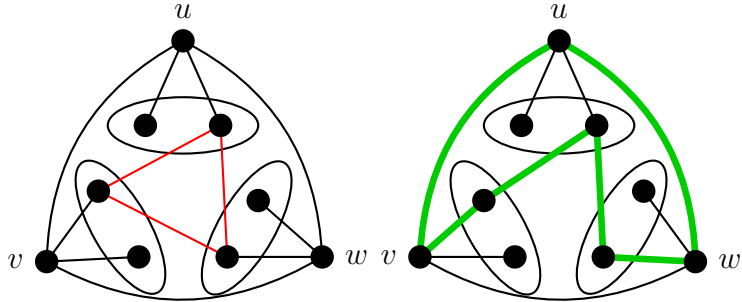

    If no vertex of $C$ has neighbors in different parts, then letting $x \in V(C)$, $x$'s neighbors lie in one part, and its neighbors' neighbors lie in the same part as $x$. That is, $C$ must be contained in two parts and is therefore bipartite on at most 4 vertices, contradicting that $C$ is good.  Thus, in Case 2, $G$ has at least 5 good cycles, completing the proof.

\end{proof}

\subsection*{Acknowledgement}

The authors wish to thank Andr\'e K\"undgen for a careful reading of an early draft and many helpful comments.

\end{document}